\theoremstyle{plain}
\newtheorem{Thm}{Theorem}[section]
\newtheorem{Cor}[Thm]{Corollary}
\newtheorem{Prop}[Thm]{Proposition}
\theoremstyle{definition}
\newtheorem{Def}[Thm]{Definition}
\newtheorem{Exl}[Thm]{Example}
\newtheorem{Rmk}[Thm]{Remark}
\newtheorem{Ques}[Thm]{Question}
\numberwithin{equation}{section}
\newcommand{\A}{{\mathcal{A}}}
\newcommand{\F}{{\mathcal{F}}}
\newcommand{\I}{{\mathcal{I}}}
\newcommand{\J}{{\mathcal{J}}}
\renewcommand{\L}{{\mathcal{L}}}
\renewcommand{\O}{{\mathcal{O}}}
\newcommand{\Q}{{\mathcal{Q}}}
\newcommand{\T}{{\mathcal{T}}}
\newcommand{\bF}{\mathbb{F}}
\newcommand{\bN}{\mathbb{N}}
\newcommand{\bT}{\mathbb{T}}
\newcommand{\bZ}{\mathbb{Z}}
\newcommand{\lip}{\langle}
\newcommand{\rip}{\rangle}
\newcommand{\Aut}{\operatorname{Aut}}
\newcommand{\Gr}{\operatorname{Gr}}
\newcommand{\id}{{\operatorname{id}}}
\DeclareMathOperator{\supp}{supp}
\DeclareMathOperator{\Arv}{Arv}
\DeclareMathOperator{\Diag}{Diag}
\DeclareMathOperator{\LL}{LL}
\newcommand{\bigslant}[2]{{\raisebox{.2em}{$#1$}\left/\raisebox{-.2em}{$#2$}\right.}}
\begin{document}
	
	\title[Toeplitz quotients and ratio limits]{Toeplitz quotient C*-algebras \\ and ratio limits for random walks}
	
	\author[A. Dor-On]{Adam Dor-On}
	\address{Department of Mathematics and Informatics \\ University of M\" unster \\ Germany.}
	\email{adoron.math@gmail.com}

	\subjclass[2020]{Primary: 60J50, 47L80. Secondary: 60J10, 37A55, 	46L40}
	\keywords{random walks, strong ratio limit property, ratio limit boundary, Martin boundary, Cuntz algebras, Toeplitz quotients, gauge-invariant uniqueness, symmetry equivariance, subproduct systems.}
	
	\thanks{The author was partially supported by NSF grant DMS-1900916 and by the European Union's Horizon 2020 Marie Sklodowska-Curie grant No 839412. }

	\begin{abstract}
	We study quotients of the Toeplitz C*-algebra of a random walk, similar to those studied by the author and Markiewicz for finite stochastic matrices. We introduce a new Cuntz-type quotient C*-algebra for random walks that have convergent ratios of transition probabilities. These C*-algebras give rise to new notions of ratio limit space and boundary for such random walks, which are computed by appealing to a companion paper by Woess. Our combined results are leveraged to identify a unique smallest symmetry-equivariant quotient C*-algebra for any symmetric random walk on a hyperbolic group, shedding light on a question of Viselter on C*-algebras of subproduct systems.
	\end{abstract}
	
		
\maketitle

\renewcommand{\O}{{\mathcal{O}}}
	
\section{Introduction}
	
It is an age-old tradition, since the work of Murray and von Neumann \cite{MvN36, MvN37}, to use operator algebras as means of producing new invariants in various theories in Mathematics. One instance where the theory of C*-algebras was useful in this regard is in the classification of Cantor minimal $\mathbb{Z}^d$ systems up to orbit equivalence through the use of K-theoretical invariants, leading to new notions of equivalence relations between the systems \cite{GPS95, GMPS10}.

Another instance of this is in graph theory and symbolic dynamics, where invariants of C*-algebras studied by Cuntz and Krieger coincide with invariants coming from subshifts of finite type \cite{CK80, Cu81}. After contributions and improvements by too many authors to list here, these works led to C*-algebraic interpretations of equivalence relations occurring naturally in symbolic dynamics \cite{MM14, CR17}, and provided a rich class of examples for classification of operator algebras \cite{ERRS+, DEG2020}.

A concrete way of constructing and studying C*-algebras of directed graphs is by realizing them as unique smallest $\mathbb{T}$-equivariant quotients of the Toeplitz C*-algebras of the graph. These Toeplitz C*-algebras are simply those generated by concatenation operators on the space of square-summable sequences indexed by all finite paths of the graph. Such concrete realizations, together with previous works on C*-algebras of subproduct systems \cite{Vis11, Vis12}, allowed us to reveal the structure of Toeplitz C*-algebras and tensor operator algebras of subproduct systems arising from stochastic matrices \cite{DOM14,DOM16}. 

In this paper, we introduce a new Cuntz-type C*-algebra $\O(G,\mu)$ for a random walk $P$ on a group $G$ induced by a finitely supported measure $\mu$, which is a quotient of the Toeplitz algebra $\T(G,\mu)$ of the stochastic matrix $P$. The computation of $\O(G,\mu)$ in this paper gave rise to new notions of ratio-limit space and boundary for random walks, prompting the study in the companion paper by Woess \cite{Woe+}. When the random walk is finite, our Cuntz C*-algebras coincide with the ones computed in \cite[Theorem 2.1]{DOM16}, but new subtleties emerge for random walks on infinite groups.

For a stochastic matrix $P$ on a group $G$, we denote by $P^{(n)}_{x,y}:=(P^n)_{x,y}$ the $n$-step transition probability from $x$ to $y$, for $x,y\in G$.

\begin{Def}
Let $P$ be an irreducible stochastic matrix over $G$. We say that $P$ has the \emph{strong ratio limit property} (SRLP) if for all $x,y,z \in \Omega$ we have that the limit $\underset{m \rightarrow \infty}{\lim} \frac{P^{(m)}_{x,y}}{P^{(m)}_{z,y}} $ exists.
\end{Def}

SRLP was first established for integer lattices in works of Chung and Erd\"{o}s \cite{CE51} and of Kesten \cite{Kes63}, and was later shown to hold for random walks over abelian groups \cite{Sto66}, random walks on nilpotent groups \cite{Mar66}, and symmetric random walks on amenable groups \cite{Ave73}. These days, establishing SRLP often relies on local limit theorems. More precisely, typical local limit theorems for $P$ determine the asymptotic behavior of $P_{x,y}^{(n)}$ in the sense that
$$
P^{(n)}_{x,y} \underset{n \rightarrow \infty}{\sim} C \cdot \beta(x,y) \cdot \rho^n \cdot n^{-\alpha},
$$
for $C, \beta(x,y), \alpha > 0$, where the ratio between the LHS and RHS goes to $1$ as $n\rightarrow \infty$ (see \cite{Rev03, BW05} for other kinds of local limit theorems). If we have a local limit theorem as above, we get SRLP where $\underset{m \rightarrow \infty}{\lim} \frac{P^{(m)}_{x,y}}{P^{(m)}_{z,y}} = \frac{\beta(x,y)}{\beta(z,y)}$.

Local limit theorems have been established for certain random walks on free products \cite{Woe86, Car88}, random walks on free groups and trees \cite{Lal93}, symmetric random walks on co-compact Fuchsian groups \cite{GL13} and symmetric random walks on non-elementary hyperbolic groups \cite{Gou14}. For more on the history of local limit theorems we refer the reader to \cite[Chapter III]{Woe00}, as well as the companion paper by Woess \cite{Woe+}. In general, it is unknown whether or not aperiodic random walks automatically satisfy SRLP.

Assuming SRLP, a ratio-limit space and boundary arise from the computation of $\O(G,\mu)$, leading to the following definitions in the theory of random walks. The \emph{ratio-limit kernel} $H : G \times G \rightarrow (0,\infty)$ is given by
$$
H(x,y) = \underset{m \rightarrow \infty}{\lim} \frac{P^{(m)}_{x,y}}{P^{(m)}_{e,y}},
$$
which turns out to be bounded in $y \in G$ for every fixed $x\in G$. We let $R_{\mu}$ be the largest subgroup of $G$ on which the functions $y \mapsto H(x,y)$ are constant for all $x\in G$. Then, we define the \emph{ratio-limit space} $\mathrm{R}(G,\mu)$ to be the smallest compactification of $G / R_{\mu}$ to which the functions $y \mapsto H(x,y)$ extend continuously for all $x\in G$. The \emph{ratio limit boundary} is given by 
$$
\partial_{\mathrm{R}} G = \mathrm{R}(G,\mu) \setminus [G / R_{\mu}].
$$

Let $\mathbb{T}$ be the unit circle, and denote by $\mathbb{K}(\ell^2(G))$ the compact operators on the Hilbert space $\ell^2(G)$. The following establishes the connection between $\O(G,\mu)$ and the ratio limit space $\mathrm{R}(G,\mu)$ in this work.

\begin{Thm}
Let $P$ be a random walk on a group $G$ induced by a finitely supported measure $\mu$, and assume that $P$ has SRLP. Then
$$
\O(G,\mu) \cong C(\mathrm{R}(G,\mu) \times \mathbb{T})\otimes \mathbb{K}(\ell^2(G)).
$$
\end{Thm}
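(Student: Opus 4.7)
The plan is to build the isomorphism by matching each of the three tensor factors on the right-hand side with a canonical subalgebra of $\O(G,\mu)$. The copy of $\mathbb{K}(\ell^2(G))$ should come from matrix units $e_{x,y}$ inherited from the Fock-space structure of $\T(G,\mu)$: these already exist in $\T(G,\mu)$ (as differences of projections onto initial segments of paths based at $x$ and $y$), and I would show they survive faithfully in the Cuntz-type quotient. The copy of $C(\mathrm{R}(G,\mu))$ should come from diagonal multiplication operators that arise as norm-limits of the ratios $P^{(m)}_{x,\cdot}/P^{(m)}_{e,\cdot}$ afforded by SRLP; these produce commuting self-adjoint elements whose joint spectrum is identified with $\mathrm{R}(G,\mu)$ precisely by the defining universal property of that compactification. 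Finally, the copy of $C(\mathbb{T})$ should come from a canonical unitary $u\in\O(G,\mu)$ implementing the gauge action pointwise, arising as the image of a Toeplitz creation operator once the appropriate Cuntz-type defect relation is imposed (an analogue of passing from the unilateral to the bilateral shift in $\O_1 \cong C(\mathbb{T})$).

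After checking that these three subalgebras pairwise commute and are nondegenerate, I would assemble them into a $*$-homomorphism
$$
\Phi : C(\mathrm{R}(G,\mu) \times \mathbb{T}) \otimes \mathbb{K}(\ell^2(G)) \longrightarrow \O(G,\mu).
$$
Surjectivity should follow by expressing the image of each generating isometry $S_g$ of $\T(G,\mu)$ in $\O(G,\mu)$ in terms of $u$, the matrix units, and the ratio-limit functions, via a polar-decomposition-type argument in the quotient. For injectivity, the natural tool is a gauge-invariant uniqueness theorem: the circle action on $\T(G,\mu)$ descends to $\O(G,\mu)$, and $\Phi$ intertwines this action with one on the target, so it is enough to verify faithfulness on the gauge-fixed subalgebra $C(\mathrm{R}(G,\mu)) \otimes \mathbb{K}(\ell^2(G))$. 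This in turn reduces to the statement that the functions $y \mapsto H(x,y)$ separate points of $\mathrm{R}(G,\mu)$, which holds by construction of $\mathrm{R}(G,\mu)$ as the \emph{smallest} compactification to which they extend continuously.

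The main difficulty, and the reason SRLP alone is not obviously enough, is the identification of the commutative subalgebra generated by the ratio-limit elements as exactly $C(\mathrm{R}(G,\mu))$ rather than a larger or smaller algebra of continuous functions on some compactification of $G/R_\mu$. Equivalently, the Cuntz-type relations defining $\O(G,\mu)$ must be shown to force precisely the collapse of cosets of the subgroup $R_\mu$ and no further identifications. This is where the input from the companion paper of Woess enters essentially, providing the analytic identification of the compactification, the boundedness of $H(x,\cdot)$, and the continuous extension of the ratio-limit kernels, which together pin down the spectrum of the commutative subalgebra and complete the isomorphism.
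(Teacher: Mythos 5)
Your overall architecture---matrix units carrying $\mathbb{K}(\ell^2(G))$, diagonal ratio-limit operators carrying a commutative part, a gauge unitary carrying $C(\mathbb{T})$, commutation checks, surjectivity via polar decomposition of the generators, and injectivity reduced to the $\mathbb{T}$-fixed-point algebra via equivariance---is essentially the decomposition the paper uses, and the injectivity reduction through faithful conditional expectations is a legitimate variant of the paper's more direct bookkeeping. The genuine gap is in the step you yourself flag as the main difficulty: identifying the spectrum of the commutative subalgebra generated by the ratio-limit elements with $\mathrm{R}(G,\mu)$. You defer this to ``input from the companion paper of Woess,'' but that paper plays no role in this theorem; it is used only to compute $\mathrm{R}(G,\mu)$ in examples. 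The boundedness of $H(x,\cdot)$ is an elementary consequence of the Gerl--Handelman ratio theorem established in Section~\ref{s:ratio-limit space}, and the continuous extension of the kernels to $\mathrm{R}(G,\mu)$ is the \emph{definition} of $\mathrm{R}(G,\mu)$; neither pins down the spectrum. What actually does the work is a computation of the quotient norm: for $T$ in the enlarged Toeplitz algebra one shows
$$
\|\overline{T}\| \;=\; \sup_{z\in G}\,\lim_m \big\| T Q^{[m,\infty)}|_{\F_{P,z}} \big\|,
$$
and this hinges on $\O(G,\mu)$ being the quotient by $\T(G,\mu)\cap \prod_{z}\mathbb{K}(\F_{P,z})$ (the \emph{full product}, so one may subtract finite-rank truncations whose rank grows with $z$) rather than by $\oplus_z\mathbb{K}(\F_{P,z})$. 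Applied to polynomials in the diagonal generators, this formula exhibits their norm as the supremum over $z\in G$ of the corresponding function of the values $H(x^{-1}y,x^{-1}z)$, hence as the supremum over $\mathrm{R}(G,\mu)$ by density of $G/R_\mu$, and Stone--Weierstrass completes the identification. Without this norm computation (or a substitute) your map $\Phi$ is not even known to be well defined on $C(\mathrm{R}(G,\mu))$, and no external analytic input supplies it.

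A secondary inaccuracy: the matrix units do not ``already exist in $\T(G,\mu)$'' in any useful sense. They arise as $\overline{V}^{(n)*}_{x,x}\overline{V}^{(n)}_{x,y}$, where the partial isometries $V^{(n)}_{x,y}$ are produced by a polar-decomposition and functional-calculus argument applied to the operators $W^{(n)}_{x,y}$; those operators require SRLP to define, need not lie in $\T(G,\mu)$ at all (only in $\T(G,\mu)+\prod_z\mathbb{K}(\F_{P,z})$), and the resulting elements satisfy the matrix-unit relations, and are independent of $n$, only modulo $\prod_z\mathbb{K}(\F_{P,z})$. The same caveat applies to your gauge unitary $u$: it is unitary only in the quotient. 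These points are repairable, but they are where the specific choice of the ideal $\J(G,\mu)$ in Definition~\ref{D:RW-cuntz-algebra} is being used, and your proposal does not engage with it.
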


This result prompted the computation of the ratio-limit boundary for several classes of examples in the companion paper by Woess \cite{Woe+}. This includes isotropic random walks on trees \cite[Theorem 3.3]{Woe+}, random walks on free groups \cite[Theorem 3.12]{Woe+}, and symmetric random walks on non-elementary hyperbolic groups \cite[Theorem 4.5]{Woe+}.

As a consequence, we are able to shed light on a questions of Viselter on C*-algebras associated with subproduct systems. Subproduct systems were introduced by Shalit and Solel in \cite{SS09} for the purpose of studying quantum Markov semigroups (see also \cite{MS02, BM10}), and for unifying the study of certain operator algebras of nc holomorphic functions (see for instance \cite{DRS11, DRS15, SSS18}). In work of Viselter \cite{Vis12}, Cuntz-Pimsner C*-algebras of a subproduct system were defined in a way that generalized essentially all previous examples.

In \cite[Section 6, Question 1]{Vis12} Viselter asked if his C*-algebras have a universal property in the spirit of a gauge-invariant uniqueness theorem. Gauge-invariant uniqueness theorems have a plethora of applications in the structure and representation theory of operator algebras, and have been extended significantly to various scenarios \cite{Pim97, Kat04, RSY04, Kat07, CLSV11, DK20, DKKLL+}. Hence, it is natural to ask for such uniqueness theorems in the context of subproduct systems. However, already in \cite[Example 2.3]{Vis12} it was shown that a \emph{unique} smallest $\mathbb{T}$-gauge equivariant quotient C*-algebra may fail to exist in general.

In a recent preprint of Arici and Kaad \cite{AK+} it is shown that subproduct systems arising from representations of $SU(2)$ give rise to a natural $SU(2)$-action on associated Toeplitz and Cuntz C*-algebras. These symmetries are leveraged to provide analogues of Gysin sequences that are used to compute the $K$-theory of these Toeplitz and Cuntz C*-algebras via Euler characteristic classes. Analogously to Viselter's question, in \cite[Section 8, Question 3]{AK+} it is asked whether Viselter's Cuntz-Pimsner C*-algebra is the unique smallest $SU(2)$-equivariant quotient for subproduct systems arising in \cite{AK+}. The key observation made by asking this question is that Viselter's Cuntz-Pimsner algebra in \cite[Example 2.3]{Vis12} turns out to be the unique smallest $SU(2)$-equivariant quotient of its respective Toeplitz C*-algebra.

Hence, Viselter's question can be interpreted as asking whether his Cuntz-Pimsner C*-algebras satisfy symmetry-uniqueness with respect to a natural class of symmetries on the Toeplitz algebra, at least in cases where a unique smallest equivariant quotient of Toeplitz algebra exists with respect to this class. 

We answer the above question in the negative, showing that Viselter's Cuntz-Pimsner C*-algebra has a proper quotient which is the unique smallest $G \times \mathbb{T}$ equivariant quotient of $\T(G,\mu)$. In fact, for symmetric aperiodic random walks on non-elementary hyperbolic groups, whose ratio-limit boundary is computed in the companion paper \cite{Woe+}, we get a unique smallest $G \times \mathbb{T}$-equivariant quotient of $\T(G,\mu)$ which is a proper quotient of both Viselter's C*-algebra and $\O(G,\mu)$.

\begin{Thm}
Let $P$ be a symmetric aperiodic random walk on a non-elementary hyperbolic group induced by a finitely supported measure $\mu$, and let $\partial G$ be the Gromov boundary of $G$. Then, the C*-algebra $C(\partial G \times \mathbb{T})\otimes \mathbb{K}(\ell^2(G))$ is the unique smallest $G \times \mathbb{T}$-equivariant quotient of $\T(G,\mu)$.
\end{Thm}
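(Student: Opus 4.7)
The plan is to combine the preceding Theorem on $\O(G,\mu)$ with the ratio-limit boundary computation from \cite{Woe+}, and then to establish uniqueness via a classification of $G\times\mathbb{T}$-invariant ideals.

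\emph{Existence.} A symmetric aperiodic random walk on a non-elementary hyperbolic group satisfies SRLP by Gou\"{e}zel's local limit theorem \cite{Gou14}, so the preceding Theorem applies and provides a $G\times\mathbb{T}$-equivariant isomorphism $\O(G,\mu)\cong C(\mathrm{R}(G,\mu)\times\mathbb{T})\otimes\mathbb{K}(\ell^2(G))$. By \cite[Theorem 4.5]{Woe+}, $R_\mu=\{e\}$ and $\mathrm{R}(G,\mu)$ is homeomorphic to the Gromov compactification $G\cup\partial G$ of $G$, with $G$ sitting inside as an open, dense, $G$-invariant subset. The subset $G\times\mathbb{T}\subset (G\cup\partial G)\times\mathbb{T}$ is open and $G\times\mathbb{T}$-invariant, so $C_0(G\times\mathbb{T})\otimes\mathbb{K}(\ell^2(G))$ is a $G\times\mathbb{T}$-invariant ideal of $\O(G,\mu)$; the corresponding quotient is $C(\partial G\times\mathbb{T})\otimes\mathbb{K}(\ell^2(G))$, which is therefore a $G\times\mathbb{T}$-equivariant quotient of $\T(G,\mu)$.

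\emph{Equivariant simplicity of the candidate quotient.} Since $\mathbb{K}(\ell^2(G))$ is simple, every ideal of $C(\partial G\times\mathbb{T})\otimes\mathbb{K}(\ell^2(G))$ is of the form $C_0(U)\otimes\mathbb{K}(\ell^2(G))$ for an open $U\subset\partial G\times\mathbb{T}$. The circle acts on itself by rotation and trivially on $\partial G$, so $\mathbb{T}$-invariance forces $U=V\times\mathbb{T}$ for open $V\subset\partial G$. Minimality of the $G$-action on the Gromov boundary of every non-elementary hyperbolic group then forces $V\in\{\emptyset,\partial G\}$, proving $G\times\mathbb{T}$-simplicity.

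\emph{Uniqueness.} Any $G\times\mathbb{T}$-equivariant quotient of $\T(G,\mu)$ corresponds to a $G\times\mathbb{T}$-invariant ideal $J\trianglelefteq\T(G,\mu)$. The strategy is two-fold. First, using the Fock-space description of $\T(G,\mu)$ developed earlier in the paper together with the $G\times\mathbb{T}$-equivariance, one shows that the kernel of the quotient $\T(G,\mu)\twoheadrightarrow\O(G,\mu)$ is the smallest nonzero $G\times\mathbb{T}$-invariant ideal, so any equivariant quotient is either zero, $\T(G,\mu)$ itself, or a quotient of $\O(G,\mu)$. Second, applying the equivariant isomorphism from the preceding Theorem, classify $G\times\mathbb{T}$-invariant ideals of $C((G\cup\partial G)\times\mathbb{T})\otimes\mathbb{K}(\ell^2(G))$: these correspond to $G$-invariant open subsets $W\subset G\cup\partial G$, and since $G$ acts transitively on itself and minimally on $\partial G$, the only possibilities are $W\in\{\emptyset, G, G\cup\partial G\}$. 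Among the three nonzero quotients $\O(G,\mu)$, $C(\partial G\times\mathbb{T})\otimes\mathbb{K}(\ell^2(G))$, and zero, only $C(\partial G\times\mathbb{T})\otimes\mathbb{K}(\ell^2(G))$ is $G\times\mathbb{T}$-simple, yielding the claimed uniqueness.

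\emph{Main obstacle.} The hard part is the first step of the uniqueness argument: identifying the kernel of $\T(G,\mu)\twoheadrightarrow\O(G,\mu)$ as the minimal nonzero $G\times\mathbb{T}$-invariant ideal of $\T(G,\mu)$. Unlike for finite stochastic matrices, where this is essentially an ideal of compact operators on a finitely generated Fock module, the infinite-group setting introduces infinitely many candidate equivariant ideals parametrized by subsets of $G$, and one must use both the group translation action and the gauge action simultaneously to rule out intermediate invariant ideals. The argument will likely parallel the Fock-space analysis that underlies the computation of $\O(G,\mu)$ in the preceding Theorem.
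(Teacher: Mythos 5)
Your existence step and your classification of the $G\times\mathbb{T}$-invariant ideals of $\O(G,\mu)\cong C(\mathrm{R}(G,\mu)\times\mathbb{T})\otimes\mathbb{K}(\ell^2(G))$ (the open invariant subsets are $\emptyset$, $[G/R_{\mu}]\times\mathbb{T}$, and everything, by transitivity of $G$ on $G/R_{\mu}$, openness, and minimality of $G$ on the boundary) match the paper's Theorem \ref{T:symmetry-uniqueness} and Corollary \ref{c:sut}. One small correction: \cite[Corollary 6.6(b)]{Woe+} gives only that $R_{\mu}$ is \emph{finite}, not trivial, so $\mathrm{R}(G,\mu)$ is a compactification of $G/R_{\mu}$ rather than of $G$ itself; this is harmless, since $G$ still acts transitively on the discrete part. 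Note also that ``unique equivariant quotient'' must be read as ``unique smallest'' (equivalently, equivariantly simple) quotient, which is the reading your final step implicitly adopts.

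The genuine gap is your first uniqueness step. You propose to show that $\J(G,\mu)=\ker\big(\T(G,\mu)\rightarrow\O(G,\mu)\big)$ is the \emph{smallest} nonzero $G\times\mathbb{T}$-invariant ideal, so that every nontrivial equivariant quotient factors through $\O(G,\mu)$. This is not true in the generality you need: Viselter's ideal $\I_{\mathbb{K}}=\oplus_{z\in G}\mathbb{K}(\F_{P,z})$ is a nonzero $G\times\mathbb{T}$-invariant ideal sitting inside $\J(G,\mu)=\T(G,\mu)\cap\prod_{z\in G}\mathbb{K}(\F_{P,z})$, and a main point of the paper (see the remark following Proposition \ref{p:indep-of-m} and the discussion around Proposition \ref{p:viselter-subprod-ideal}) is that this containment is typically proper, so that $\T(G,\mu)/\I_{\mathbb{K}}$ --- Viselter's Cuntz--Pimsner algebra --- is an intermediate equivariant quotient that does \emph{not} factor through $\O(G,\mu)$. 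So the minimality statement you flag as the ``main obstacle'' is an obstacle because it is false, not because it is hard. The paper's fix is much cheaper and avoids the issue entirely: given an arbitrary proper $\lambda$-invariant ideal $\J$, do not try to prove $\J\supseteq\J(G,\mu)$; instead pass to $\J+\J(G,\mu)$, observe that this is still a proper invariant ideal (because the image $\overline{\J}$ of $\J$ in $\O(G,\mu)$ is proper), and then apply your classification of invariant ideals of $\O(G,\mu)$ to get $\J\subseteq\J+\J(G,\mu)\subseteq\J_{\lambda}$. This shows $\J_{\lambda}$ is the largest proper invariant ideal, which is exactly the uniqueness assertion, with no minimality claim about $\J(G,\mu)$ needed.
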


This paper has five sections, including this introduction. In Section \ref{s:stoch} we provide some of the necessary preliminaries on stochastic matrices and random walks. In Section \ref{s:ratio-limit space} we introduce the ratio-limit space and boundary of a random walk with SRLP arising in this work, and provide some examples by appealing to the companion paper by Woess \cite{Woe+}. In Section \ref{S:C*-alg-for-Markov-chains} we define Toeplitz and Cuntz algebras for random walks, and compute the latter under the assumption of SRLP. Finally, in Section \ref{s:symmetry-uniqueness} we find conditions on the ratio limit boundary to ensure uniqueness of smallest equivariant quotients, and explain how our setting transfers to the context of subproduct systems where we discuss consequences on Viselter's question.

\subsection*{Acknowledgments} The author is grateful to Wolfgang Woess for many helpful exchanges on the subject of random walks and their boundaries, for providing remarks on this paper, and for computing ratio limit boundaries in many classes of examples in the companion paper \cite{Woe+}. The author is also grateful to Christopher Linden and Alex Vernik for suggestions, discussions and remarks on draft versions of this paper.

	
\section{Stochastic matrices and random walks.} \label{s:stoch}

In this subsection we discuss some of the needed theory on stochastic matrices and random walks. For more on the relevant theory we recommend the survey \cite{Woe94} and the books \cite{Woe00, Woe09}.

\begin{Def}
Let $\mathcal{X}$ be a countable set. A \emph{stochastic matrix} over $\mathcal{X}$ is a map $P : \mathcal{X} \times \mathcal{X} \rightarrow [0,1]$ such that $\sum_j P_{ij} = 1$. We let $\Gr(P)$ be the directed graph on $\mathcal{X}$ with directed edges
$E(P):= \{ \ (i,j) \ | \ P_{ij}>0 \ \}$. We say that $P$ is \emph{irreducible} if $\Gr(P) = (\mathcal{X},E(P))$ is a strongly connected directed graph.
\end{Def}

When $P$ is a stochastic matrix over $\mathcal{X}$, we denote by $P^n$ the $n$-th iterate of $P$, and by $P^{(n)}_{ij}$ the $ij$-th entry of $P^n$. We denote $P^0 := I$ the identity matrix. We say that $P$ is \emph{symmetric} when it is equal to its transpose. We also say that $P$ is \emph{aperiodic} if the greatest common divisor of lengths of all cycles in $\Gr(P)$ is $1$. We will assume henceforth that $\mathcal{X}$ is countable.

\begin{Def}
Let $P$ be an irreducible stochastic matrix over $\mathcal{X}$. The \emph{spectral radius} of $P$ is given by
$$
\rho(P):= \limsup_{n\rightarrow \infty} \sqrt[n]{P^{(n)}_{ij}}
$$
and is independent of $i,j\in \mathcal{X}$.
\end{Def}

We denote by $\rho:=\rho(P)$ when the context is clear. We will say that a non-negative function $h : \mathcal{X} \rightarrow [0,\infty)$ is \emph{$\rho$-harmonic} at $i\in \mathcal{X}$ if $(Ph)(i):= \sum_{j\in \mathcal{X}} P_{ij}h(j) = \rho \cdot h(i)$. The Green kernel of $P$ is given for $i,j \in \mathcal{X}$ by
$$
G(i,j|z) = \sum_{n=0}^{\infty}P^{(n)}_{ij}z^n,
$$
with radius of convergence $\rho^{-1}$. We denote also $F(i,j|z):= \frac{G(i,j|z)}{G(j,j|z)}$, so that by \cite[Lemma 3.66]{Woe09} we get that $\lim_{z \rightarrow \rho^{-1}} F(i,j|z)$ exists for every $i,j\in \mathcal{X}$. Let $o \in \mathcal{X}$ be some fixed element. We define the $\rho$-Martin kernel of $P$ to be
$$
K(i,j) := \lim_{z\rightarrow \rho^{-1}} \frac{G(i,j|z)}{G(o,j|z)} = \lim_{z\rightarrow \rho^{-1}} \frac{F(i,j|z)}{F(o,j|z)},
$$
which exists and is finite. For fixed $j\in \mathcal{X}$, the function $i \mapsto K(i,j)$ is then $\rho$-harmonic at all points, except when $i=j$, while for fixed $i\in \mathcal{X}$ the function $j \mapsto K(i,j)$ is bounded above and away from $0$.

Now let $\phi : \mathcal{X} \rightarrow \bN$ be some bijection. The $\rho$-Martin compactification $\Delta_{\rho}\mathcal{X}$ is the completion of $\mathcal{X}$ with respect to the metric
$$
d(j_1,j_2) = \sum_{i\in \mathcal{X}} \frac{|K(i,j_1) - K(i,j_2)| + |\delta_{ij_1} - \delta_{ij_2}|}{C_i \cdot 2^{\phi(i)}}.
$$
Then, $\Delta_{\rho} \mathcal{X}$ becomes the smallest compactification of $\mathcal{X}$ to which the functions $i \mapsto K(i,j)$ extend continuously for every fixed $j\in \mathcal{X}$, and contains $\mathcal{X}$ as an open subset (see for instance \cite[Theorem 7.13]{Woe00} for an equivalent construction). A sequence $\alpha_n \in \mathcal{X}$ converges to a $\alpha \in \Delta_{\rho}\mathcal{X}$ if either $\alpha \in \mathcal{X}$ and $\alpha_n$ is eventually equal to $\alpha$, or $\alpha_n$ is eventually outside any finite set and $\lim_n K(i,\alpha_n) = K(i,\alpha)$ for every $i \in \mathcal{X}$. The closed subspace $\partial_{\Delta, \rho} \mathcal{X} = \Delta_{\rho}\mathcal{X} \setminus \mathcal{X}$ is called the $\rho$-Martin boundary of $P$.

Our focus in this work will be on irreducible stochastic matrices that are random walks on groups, with finitely supported measures.

\begin{Def}
Let $G$ be a countable discrete group, and $\mu : G \rightarrow [0,1]$ a finitely supported probability measure such that $\supp(\mu)$ generates $G$ as a semigroup. The stochastic matrix $P$ on $G$ given by $P_{x,y} = \mu(x^{-1}y)$ is called the \emph{random walk} on $G$ induced by $\mu$.
\end{Def}

The iterates of $P$ are then given by $P_{x,y}^{(n)} = \mu^{* n}(x^{-1}y)$ where $\mu^{*n}$ is the $n$-th convolution power of $\mu$. Note also that $P$ is symmetric if and only if $\mu(g) = \mu(g^{-1})$ for every $g\in G$, and that $P$ is aperiodic if and only if there is some odd $n$ such that $\mu^{*n}(e) > 0$.

The main reason for choosing a finitely supported measure $\mu$ in the above definition, is to assure that the random walk $P$ has finite range, or alternatively, that the graph $\Gr(P)$ is locally finite. More precisely, for any fixed $z\in G$, there are finitely many $y\in G$ such that $P_{y,z} > 0$.

One of the defining features of random walks is that they have symmetries coming from a group. That is, for every $g\in G$ we have that $P_{gx,gy}^{(n)} = P_{x,y}^{(n)}$. This gives rise to $G$-invariance of the Green kernel in the sense that for every $g\in G$ and $x,y\in G$ and $0< z < \rho^{-1}$ we have $G(gx,gy|z) = G(x,y|z)$, and for $0 < z \leq \rho^{-1}$ we have $F(gx,gy|z) = F(x,y|z)$. But then, since $ K(x,gy) = K(g^{-1}x,y) / K(g^{-1},y)$, we see that the left multiplication map $\alpha_g: x \mapsto gx$ is continuous with respect to the metric $d$. Hence, $\alpha_g$ extend to a homeomorphism (still denoted) $\alpha_g$ on $\Delta_{\rho}G$. Furthermore, $\alpha_g$ clearly maps $G$ onto itself, and so must map $\partial_{\Delta, \rho}G$ onto itself as well. Thus, when $P$ is a random walk, we see that the compacta $\Delta_{\rho}G$ and $\partial_{\Delta, \rho}G$ both carry $G$-actions by homeomorphisms induced by left multiplication on $G$.

\section{Ratio limit space and boundary.} \label{s:ratio-limit space}

Recall an essential assumption for random walks and their operator algebras, that will be used throughout this paper.

\begin{Def}
Let $P$ be a random walk on $G$ induced by a finitely supported measure $\mu$. We say that $P$ has the \emph{strong ratio limit property} (SRLP) if for all $x,y,z \in G$ we have that $\lim_{m \rightarrow \infty} \frac{P^{(m)}_{x,y}}{P^{(m)}_{z,y}} $ exists.
\end{Def}

Note that if these limits exist and are all non-zero, this implies that $P$ is aperiodic, so that for any $x,y \in G$ we have $n_0$ such that for all $n\geq n_0$ one must have $P^{(n)}_{x,y} > 0$.  

Suppose now that $P$ is an aperiodic random walk on a group $G$ induced by a finitely supported measure $\mu$. By \cite[Satz 1]{Ger78} (see also \cite[Proposition 7.1]{Han99}) we know that $\lim_{m \rightarrow \infty} \frac{\mu^{*(m+1)}(x)}{\mu^{*m}(x)} = \rho$ for every $x \in G$ where $\rho = \rho(P)$ is the spectral radius. Thus, the limiting behavior of the sequences $\Big \{ \frac{\mu^{*m}(x^{-1}y)}{\mu^{*m}(y)}\Big \}$ is comparable with some other better-behaved sequences. Indeed, when $\mu^{*n}(y) > 0$ and $\mu^{*n'}(x^{-1}y) > 0$ we get that
$$
\frac{\mu^{*m}(x^{-1}y)}{\mu^{*m}(y)} \underset{m\rightarrow \infty}{\sim} \frac{\rho(P)^n \mu^{*m}(x^{-1}y)}{\mu^{*(m+n)}(y)} \underset{m\rightarrow \infty}{\sim} \frac{\mu^{*(m+n')}(x^{-1}y)}{\rho(P)^{n'} \cdot \mu^{*m}(y)}
$$
The advantage of doing this, is that we can assure that eventually $\frac{\mu^{*m}(x^{-1}y)}{\mu^{*m}(y)}$ is bounded above and away from $0$ for every fixed $x \in G$. Indeed, for smallest $n,n'\in \bN$ such that $\mu^{*n}(x), \mu^{*n'}(x^{-1}) >0$ we get that 
$$
\frac{\rho(P)^n \mu^{*m}(x^{-1}y)}{\mu^{*(m+n)}(y)} \leq C_x , \ \ \text{and} \ \ \frac{\mu^{*(m+n')}(x^{-1}y)}{\rho(P)^{n'} \cdot \mu^{*m}(y)} \geq c_x.
$$
where
$$
C_x = \frac{\rho(P)^n}{\mu^{*n}(x)} \ \text{and} \ c_x = \frac{\mu^{*n'}(x^{-1})}{\rho(P)^{n'}}.
$$
Hence, for fixed $x\in G$ and sufficiently large $m$ we get $c_x \leq \frac{\mu^{*m}(x^{-1}y)}{\mu^{*m}(y)} \leq C_x$ for every $y \in G$. 

Suppose now that $P$ has SRLP. From the $G$-symmetry of the random walk, this is equivalent to the existence of the limits $\lim_m \frac{\mu^{*m}(x)}{\mu^{*m}(e)}$ for each $x\in G$. We may define the \emph{ratio limit kernel} $H : G \times G \rightarrow (0,\infty)$ given by 
$$
H(x,y) = \lim_m \frac{\mu^{*m}(x^{-1}y)}{\mu^{*m}(y)}.
$$
Then, by the above $x \mapsto H(x,y)$ is $\rho$-harmonic for every fixed $y\in G$ and $y \mapsto H(x,y)$ is bounded and bounded away from $0$ for every fixed $x\in G$. For each $x\in X$, we denote by $H(x,\cdot)$ the \emph{ratio-limit function} $y \mapsto H(x,y)$.

\begin{Prop} \label{p:rlr}
Let $P$ be a random walk on a group $G$ induced by a finitely supported measure $\mu$. Suppose that $P$ has SRLP. Then the set
$$
R_{\mu}:= \{ \ y \in G \ | \ H(x,y) = H(x,e) \ \forall x\in G \ \}
$$
is a subgroup of $G$.
\end{Prop}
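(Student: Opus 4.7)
The plan is to reduce the two-variable condition defining $R_{\mu}$ to a multiplicativity condition on a single-variable function, after which the subgroup axioms follow mechanically. First I would invoke SRLP in the equivalent form noted just before the statement to introduce
$$
L(y) := \lim_{m\to\infty} \frac{\mu^{*m}(y)}{\mu^{*m}(e)} = H(y^{-1}, e),
$$
which exists for every $y\in G$ and, by the stated boundedness and positivity of $H(x,\cdot)$, takes values in $(0,\infty)$ with $L(e)=1$. Dividing numerator and denominator in the definition of $H(x,y)$ by $\mu^{*m}(e)$ then yields the factorization
$$
H(x,y) = \frac{L(x^{-1}y)}{L(y)},
$$
so the condition $H(x,y) = H(x,e) = L(x^{-1})$ becomes $L(x^{-1}y) = L(x^{-1})\,L(y)$. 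Writing $g = x^{-1}$, this gives the cleaner characterization
$$
R_{\mu} = \{ \ y\in G \ | \ L(gy) = L(g)\,L(y) \ \forall g\in G \ \}.
$$

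With this reformulation, the subgroup axioms are straightforward. Clearly $e\in R_{\mu}$ since $L(e)=1$. For closure under products, if $y_1,y_2\in R_{\mu}$ then taking $g=y_1$ in the identity for $y_2$ yields $L(y_1 y_2) = L(y_1)\,L(y_2)$, and for arbitrary $g\in G$,
$$
L(g y_1 y_2) = L((g y_1) y_2) = L(g y_1)\,L(y_2) = L(g)\,L(y_1)\,L(y_2) = L(g)\,L(y_1 y_2),
$$
so $y_1 y_2 \in R_{\mu}$. For closure under inverses, if $y\in R_{\mu}$ then setting $g=y^{-1}$ in $L(gy)=L(g)L(y)$ gives $L(y^{-1}) = 1/L(y)$, and then for any $h\in G$ setting $g=h y^{-1}$ yields $L(h) = L(h y^{-1})\,L(y)$, i.e.\ $L(h y^{-1}) = L(h)\,L(y^{-1})$, proving $y^{-1}\in R_{\mu}$.

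I do not anticipate any genuine obstacle; the only nontrivial input beyond routine bookkeeping is the factorization $H(x,y) = L(x^{-1}y)/L(y)$, which follows directly from SRLP together with the lower bound on $H(x,\cdot)$ already established in the excerpt (the latter guaranteeing $L>0$, which is what makes the division legitimate).
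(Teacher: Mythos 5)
Your proof is correct and rests on the same telescoping of ratios that drives the paper's argument: the paper verifies closure under products and inverses by splitting $\frac{\mu^{*m}(x^{-1}yz)}{\mu^{*m}(yz)}$ into a product of two ratios converging to values of $H$, which is exactly your identity $H(x,y)=L(x^{-1}y)/L(y)$ in disguise. Your repackaging of $R_\mu$ as the multiplicativity locus of the single-variable function $L$ is a clean way to organize the same computation, and the one point requiring care --- that $L>0$ so the division is legitimate --- you address correctly by citing the bounds established earlier in the section.
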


\begin{proof}
For $y,z\in R_{\mu}$ and $x\in G$ we have
$$
H(x,yz) = \lim_m \frac{\mu^{*m}(x^{-1}yz)}{\mu^{*m}(yz)} = \lim_m \frac{\mu^{*m}((y^{-1}x)^{-1}z)}{\mu^{*m}(z)} \cdot \frac{\mu^{*m}(z)}{\mu^{*m}((y^{-1})^{-1}z)} =
$$
$$
= H(y^{-1}x,z) H(y^{-1},z)^{-1} = H(y^{-1}x,e)H(y^{-1},e)^{-1} = 
$$
$$
\lim_m \frac{\mu^{*m}(x^{-1}y)}{\mu^{*m}(e)} \cdot \frac{\mu^{*m}(e)}{\mu^{*m}(y)} = H(x,y) = H(x,e),
$$
and we also have
$$
H(x,y^{-1}) = \lim_m \frac{\mu^{*m}((yx)^{-1})}{\mu^{*m}(y^{-1})} = \lim_m \frac{\mu^{*m}((yx)^{-1})}{\mu^{*m}(e)} \frac{\mu^{*m}(e)}{\mu^{*m}(y^{-1})} =
$$
$$
H(yx,e)H(y,e)^{-1} = H(yx,y)H(y,y)^{-1} = 
$$
$$
\lim_m \frac{\mu^{*m}(x^{-1})}{\mu^{*m}(y)}\frac{\mu^{*m}(y)}{\mu^{*m}(e)} = H(x,e).
$$
\end{proof}

We call $R_{\mu}$ the \emph{ratio-limit radical}, as it is the largest subgroup of $G$ on which the ratio-limit functions $\{H(x,\cdot)\}_{x\in G}$ are constant. We let $G / R_{\mu}$ be the left cosets of $G$ by $R_{\mu}$. Note that the ratio-limit functions are well-defined on, and separate points in $G / R_{\mu}$.

\begin{Rmk}
When $P$ as above is also symmetric, there is a subgroup $A_{\mu} \leq G$ defined in \cite{ER19} given by
$$
A_{\mu} = \Big\{ \ y \in G \ \big| \ \lim_m \frac{\mu^{*m}(y)}{\mu^{*m}(e)} = 1 \ \Big\},
$$
which is amenable by \cite[Theorem 4.2]{ER19}. Together with SRLP, from the definition of $R_{\mu}$ we see that for any $y\in R_{\mu}$ we have $H(y,y) = H(y,e)$. Hence, by symmetry of $P$ we dedudce that $y \in A_{\mu}$, so that $R_{\mu}$ is a subgroup of $A_{\mu}$. Hence, when $P$ is symmetric we get that $R_{\mu}$ is amenable.
\end{Rmk}

\begin{Def}
Let $P$ be a random walk on a group $G$ induced by finitely supported $\mu$. Suppose that $P$ satisfies SRLP. The (reduced) \emph{ratio-limit space} $\mathrm{R}(G,\mu)$ is the smallest compactification of $G / R_{\mu}$ which makes the ratio limit functions $\{H(x,\cdot)\}_{x\in G}$ extend continuously to $\mathrm{R}(G,\mu)$. More precisely, if $\phi : G \rightarrow \bN$ is some bijection, then $\mathrm{R}(G,\mu)$ is the completion of $G / R_{\mu}$ with respect to the bounded metric 
$$
d(yR_{\mu},zR_{\mu}) = \sum_{x\in G} \frac{|H(x,y) - H(x,z)|}{C_x \cdot 2^{\phi(x)}}.
$$
The subspace $\partial_{\mathrm{R} }G = \mathrm{R}(G,\mu) \setminus (G / R_{\mu})$ is called the (reduced) \emph{ratio-limit boundary} of the random walk.
\end{Def}

It follows from general topology (see \cite[Theorem 3.5.8]{Eng89}) that $G / R_{\mu}$ is open in $\mathrm{R}(G,\mu)$, so that $\partial_{\mathrm{R} }G$ is a closed subspace. The topology on $\mathrm{R}(G,\mu)$ is determined by specifying that a sequence $y_n \in G$ converges to a point $y \in \mathrm{R}(G,\mu)$ if either $y \in G$ and $y_n\sim y$ for eventually every $n$, or that $y \in \partial_{\mathrm{R}} G$ and $\lim_n H(x,y_n) = H(x,y)$ for every $x\in G$. Furthermore, since $H(x,gy) = H(g^{-1}x,y) / H(g^{-1},y)$, again we get that left multiplication $\beta_g : xR_{\mu} \mapsto gxR_{\mu}$ on $G / R_{\mu}$ is continuous with respect to $d$, and extends to a homeomorphism (still denoted) $\beta_g$ on $\mathrm{R}(G,\mu)$. Hence, as before, we get that the compacta $\mathrm{R}(G,\mu)$ and $\partial_{\mathrm{R}}G$ carry $G$-actions by homeomorphisms induced by left multiplication on $G$ (for the latter when it is non-empty).

When $G$ is an amenable group, and $P$ is a symmetric aperiodic random walk on $G$ induced by a finitely supported $\mu$, by Avez' theorem \cite{Ave73} (see also \cite[Corollary 3.3]{ER19}) we get for any $x\in G$ that $\lim \frac{\mu^{*m}(x)}{\mu^{*m}(e)} = 1$. In this case $R_{\mu} = G$, so that $\mathrm{R}(G,\mu) = G / R_{\mu}$ is trivial, and the ratio limit boundary is empty. Together with this, the next example shows that the ratio limit boundary / space may fail to coincide with the $\rho$-Martin boundary in general.

\begin{Exl}[Random walks on lamplighter groups] \label{E:lamplighter}
Let $\LL(\bZ^d) = \bZ^d \times \big[\bigoplus_{x \in \bZ^d}\bZ_2 \big]$ where $\bigoplus_{x \in \bZ^d}\bZ_2$ are finitely supported functions on $\bZ^d$ with $d\geq 3$. Then, $\LL(\bZ^d)$ has group multiplication given by $(x,w) \cdot (y,u) = (x+y, w+ T_x(u))$ where $T_x(u)$ is given by $T_x(u)(z) = u(z-x)$. Let $P$ be an aperiodic symmetric random walk on $\LL(\bZ^d)$ induced by a finitely supported measure $\mu$. From \cite[Example 6.1]{Kai83} we know that $\LL(\bZ^d)$ is amenable, so that by Kesten's amenability criterion \cite{Kes59} we get that the spectral radius of $P$ is $\rho=1$. On the other hand, by \cite[Proposition 6.1]{Kai83} we also get that $\mu$ has a non-trivial Poisson boundary. Since the Poisson boundary is contained in the $1$-Martin boundary, we see that $P$ has non-trivial $\rho$-Martin boundary while having a trivial ratio limit space and empty ratio limit boundary.
\end{Exl}

\begin{Exl} \label{E:free}
Let $\bF_s$ be the free group on $s$ generators $a_1,...,a_s$, and let $d$ be the shortest path metric on the Cayley graph $\mathrm{T}$ of $\bF_s$ with respect to the symmetric generating set $S = \{a_1,...,a_s, a_1^{-1},...,a_s^{-1}\}$. Note that $\mathrm{T} = \mathrm{T}_{2s}$ is just the $2s$ regular tree. We take a (finitely supported) probability measure $\mu$ on $\bF_s$ with $\mu(e) > 0$, which is a function $\mu(w) = f(d(e,w))$ of the distance of $w\in \bF_s$ to the identity element $e \in \bF_s$ in $\mathrm{T}$. Then $\mu$ induces what is known as an \emph{isotropic} random walk on $\bF_s$. By the local limit theorem of Sawyer \cite{Saw78} (see also \cite[Theorem 19.30]{Woe00}), we have that 
$$
P^{(n)}_{x,y} \underset{n \rightarrow \infty}{\sim} C \cdot \beta(x,y) \cdot \rho^n \cdot n^{-3/2},
$$
where $\beta(x,y) = (1 + \frac{s-1}{s}d(x,y))(2s-1)^{-d(x,y)/2}$. Hence, for $x,y\in \bF_s$ we get a formula for the ratio-limit kernel,
$$
H(x,y) = \frac{1 + \frac{s-1}{s}d(x,y)}{1 + \frac{s-1}{s}d(e,y)}(2s-1)^{\frac{d(e,y)-d(x,y)}{2}}.
$$
In particular, we see that the ratio-limit functions separate points in $\bF_s$, so that $R_{\mu}$ is trivial. Hence, $\bF_s$ embeds into $\mathrm{R}(\bF_s,\mu)$, and by \cite[Theorem 3.3]{Woe+} the ratio limit boundary $\partial_{\mathrm{R}} \bF_s$ coincides with the space of ends $\partial \mathrm{T}_{2s}$ of the $2s$-regular tree $\mathrm{T}_{2s}$. Hence, we get that $\mathrm{R}(\bF_s,\mu) = \bF_s \cup \partial \mathrm{T}_{2s}$.
\end{Exl}

\begin{Exl}
Let $P$ be an aperiodic isotropic random walk on $\bF_{s_1}$ arising from $\mu_1$ and $Q$ be an apriodic symmetric random walk on $\mathbb{Z}^{s_2}$ arising from $\mu_2$ where $s_1 \geq 2$ and $s_2 \geq 1$. Take the Cartesian product $G = \bF_{s_1} \times \mathbb{Z}^{s_2}$, and let $\pi_1 : G \rightarrow \bF_{s_1}$ and $\pi_2 : G \rightarrow \mathbb{Z}^{s_2}$ be the coordinate projections. By \cite[Theorem 13.12]{Woe00} we have that $Q$ satisfies a local limit theorem of the form
$$
Q^{(n)}_{v_1,v_2} \underset{n \rightarrow \infty}{\sim} C \cdot n^{-s_2/ 2}.
$$
Next, define the \emph{Cartesian} random walk by setting $\mu = \frac{1}{2}[\mu_1 \circ \pi_1^{-1} + \mu_2 \circ \pi_2^{-1}]$ where $\mu_1$, where $\mu_1 \circ \pi_1^{-1}$ and $\mu_2 \circ \pi_2^{-1}$ are pushforward measures. Then, by \cite[Proposition 5.3]{Woe+} (see also \cite{CS87}), we get that the ratio limit kernel $H$ for $\mu$ is given by $H((w_1,v_1),(w_2,v_2)) = H_1(w_1,w_2)H_2(v_1,v_2)$, where $H_1$ and $H_2 = 1$ are the ratio limit kernels of $P$ and $Q$ respectively. By Example \ref{E:free} and the formula for $H_1$ there, so we get that $R_{\mu} = R_{\mu_2} = \mathbb{Z}^{s_2}$. Thus, $G / R_{\mu}$ coincides with $\bF_{s_1}$, and the ratio limit space $\mathrm{R}(G,\mu)$ is equal to $\bF_{s_1} \cup \partial \mathrm{T}_{2s_1}$.
\end{Exl}

The companion paper \cite{Woe+} deals mostly with \emph{full} versions of the ratio-limit compacta, which are generally different from the respective ones considered here\footnote{In \cite{Woe+}, the full ratio-limit space and boundary are referred to simply as ratio-limit compactification and boundary respectively, and our ratio-limit space and boundary are also refereed to as the \emph{reduced} ratio-limit compactification and boundary respectively.}. The full ratio-limit space is defined without incorporating $R_{\mu}$ into the picture, and is the smallest compactification $\Delta_{\mathrm{R}}G$ of $G$ to which the ratio-limit functions $y\mapsto H(x,y)$ extend continuously (see \cite[Section 6]{Woe+} for a comparison). It is straightforward to show that the quotient map $G \rightarrow G / R_{\mu}$ extends to a continuous surjective $G$-equivariant map from $\Delta_{\mathrm{R}}G$ onto $R(G,\mu)$. However, a key observation is that the full ratio-limit boundary and the (reduced) ratio-limit boundary considered in this paper coincide whenever $G$ is infinite and $R_{\mu}$ is finite. Hence, by \cite[Corollary 6.6]{Woe+} the two ratio-limit boundaries coincide for all classes of random walks considered in \cite{Woe+}.

A key step in the computation of full ratio limit boundaries in \cite{Woe+} is to show that they coincide with the respective $\rho$-Martin boundaries, whose computation was previously attained in many classes of examples. More precisely, for the classes of examples considered in \cite{Woe+}, it follows that the quotient map $G \rightarrow G / R_{\mu}$ induces a homeomorphism $\tau : \partial_{\Delta, \rho} G \rightarrow \partial_{\mathrm{R}} G$ (which also satisfies $K(x,\xi) = H(x,\tau(\xi))$ for every $x\in G$ and $\xi \in \partial_{\Delta, \rho}G$).

In such cases, a simple approximation argument together with continuity of left multiplication by $g$ shows that $\tau(g\xi) = g\tau(\xi)$ for every $\xi \in \partial_{\Delta, \rho}G$ and $g\in G$. Thus, we get that the identification $\tau$ is automatically $G$-equivariant. This, together with the above examples, suggests the following question:

\begin{Ques}
Suppose $P$ is a random walk on $G$ induced by a finitely supported measure $\mu$ with SRLP and spectral radius $\rho$. Does the $\rho$-Martin compactification cover the ratio-limit space? More precisely, is there a surjective $G$-equivariant continuous map $\tau :\Delta_{\rho}G \rightarrow \mathrm{R}(G,\mu)$ which restricts to the quotient map $G \rightarrow G / R_{\mu}$ on $G$?
\end{Ques}

	
\section{Toeplitz quotient C*-algebras for random walks.} \label{S:C*-alg-for-Markov-chains}

The ratio-limit space $\mathrm{R}(G,\mu)$ arises from the computation of the Cuntz C*-algebra $\O(G,\mu)$ as part of its spectrum. In this section of the paper we will show this. Toeplitz C*-algebras, tensor algebras and C*-envelopes arising from stochastic matrices were studied previously in \cite{DOM14, DOM16} (see also \cite{CDHLZ+}), but the definition of Cuntz C*-algebra we give below is new.

Let $P$ be the stochastic matrix over a set $\mathcal{X}$. For each $m \in \bN$ we denote $\F^{(m)}_P$ the Hilbert space with orthonormal basis $\{e_{jk}^{(m)}\}_{(j,k)\in E(P^m)}$. The Fock Hilbert space of $P$ is then given by
$$
\F_P:= \bigoplus_{m=0}^{\infty} \F^{(m)}_P
$$

Next, for each $n\in \bN$ and $(i,j) \in E(P^n)$ we define an operator $S^{(n)}_{ij}$ on $\F_P$ by setting for every $(j',k) \in E(P^m)$,
$$
S^{(n)}_{ij}(e_{j'k}^{(m)}) = \delta_{jj'}\sqrt{\frac{P^{(n)}_{ij}P^{(m)}_{jk}}{P^{(n+m)}_{ik}}}e_{ik}^{(n+m)}.
$$
Since $S^{(n)}_{ij}$ maps an orthonomal basis to a uniformly bounded (by $1$) orthogonal set, it defines a bounded operator on $\F_P$. For a fixed $k\in \mathcal{X}$, we denote by $\F_{P,k}$ the closed linear span of $\{ \ e^{(m)}_{jk} \ | \ (j,k)\in E(P^m), \ m\geq 0 \ \}$. It follows that $\F_{P,k}$ is a reducing subspace for the operators $S^{(n)}_{ij}$. For a fixed $m\in \bN$ we also denote $\F^{(m)}_{P,k}$ the closed linear span of $\{ \ e^{(m)}_{jk} \ | \ (j,k)\in E(P^m) \ \}$.

\begin{Def}
Let $P$ be a stochastic matrix on a set $\mathcal{X}$. The Toeplitz C*-algebras of $P$ is given by
$$
\T(P):= C^*( \ S^{(n)}_{ij} \ | \ (i,j)\in E(P^n), \ n \in \bN \ ).
$$
\end{Def}

Note that $c_0(\mathcal{X}) \subseteq \T(P)$ via the identification $(c_i) \mapsto \sum_{i\in \mathcal{X}}c_i S^{(0)}_{ii}$ for $(c_i) \in c_0(\mathcal{X})$. We will henceforth identify $c_0(\mathcal{X})$ with its copy in $B(\F_P)$ as above, and denote by $p_i: = S^{(0)}_{ii}$ the operator corresponding in $c_0(\mathcal{X})$ to the characteristic function of $i \in \mathcal{X}$. 

\begin{Rmk}
We warn the reader that the Toeplitz C*-algebra $\T(P)$ defined here and in \cite{DOM14,DOM16} for a stochastic matrix $P$ are different when $\mathcal{X}$ is infinite. For instance, the former is non-unital while the latter is unital. In Section \ref{s:symmetry-uniqueness} we will see how the Toeplitz C*-algebras given here arise from subproduct systems with coefficients $c_0(\mathcal{X})$, while the Toeplitz C*-algebra in \cite{DOM14,DOM16} arise from subproduct systems with coefficients $\ell^{\infty}(\mathcal{X})$. 
\end{Rmk}

\begin{Def} \label{D:RW-cuntz-algebra}
Let $P$ be a stochastic matrix over $\mathcal{X}$. Denote by $\J(P):= \T(P) \cap \prod_{k\in \mathcal{X}} \mathbb{K}(\F_{P,k})$, which is a closed ideal in $\T(P)$. We define the Cuntz C*-algebra of $P$ to be
$$ 
\O(P) := \T(P) / \J(P).
$$
\end{Def}

We let $q_P : \T(P) \rightarrow \O(P)$ be the natural quotient map. Since for each $i\in P$ we have that $p_i = S^{(0)}_{ii} \notin \prod_{k\in P} \mathbb{K}(\F_{P,k})$, we see that $\{q_P(p_i)\}_{i\in \mathcal{X}}$ are still pairwise orthogonal projections, so that $q_P$ is injective on $c_0(\mathcal{X})$. Hence, we may also identify $c_0(\mathcal{X})$ as a subalgebra of $\O(P)$ via $q_P$. 

Henceforth, we will assume that $P$ is a random walk on a group $G$ induced by a finitely supported measure $\mu$. To emphasize this we denote 
$$
\T(G,\mu) := \T(P), \ \J(G,\mu):= \J(P), \ \text{and} \ \O(G,\mu) := \O(P).
$$

For $m\in \bN$ and $x\in G$, denote by $Q^{(m)}$ the orthogonal projection from $\F_P$ onto $\F^{(m)}_P$, and $Q^{(m)}_x := Q^{(m)} p _x = p_x Q^{(m)}$. Denote also $Q^{[m,\infty)} := \sum_{\ell=m}^{\infty} Q^{(\ell)}$, and $Q^{[m,\infty)}_x:= Q^{[m,\infty)}p_x = p_x Q^{[m,\infty)} = \sum_{\ell=m}^{\infty} Q^{(\ell)}_x$.

\begin{Prop} \label{p:viselter-ideal}
Let $P$ be a random walk on a group $G$ induced by a finitely supported measure $\mu$. Then $Q^{(0)}_x \in \T(G,\mu)$ for every $x \in G$. Moreover, we have that the closed ideal $\I_{\mathbb{K}}:= \lip Q^{(0)}_z \rip_{z\in G} \lhd \T(G,\mu)$ is equal to $\oplus_{z \in G} \mathbb{K}(\F_{P,z})$, and that $Q_x^{(\ell)} \in \I_{\mathbb{K}}$ for every $\ell \in \bN$ and $x \in G$.
\end{Prop}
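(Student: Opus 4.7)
For the first claim that $Q^{(0)}_x \in \T(G,\mu)$, the plan is to exhibit it as a finite linear combination of generators. Using the formula for $S^{(n)}_{ij}$ on basis vectors and a direct computation of its adjoint, one finds
$$
S^{(n)}_{ij}(S^{(n)}_{ij})^* e^{(n')}_{j'k} = \delta_{j'i}\,\frac{P^{(n)}_{ij} P^{(n'-n)}_{jk}}{P^{(n')}_{ik}}\, e^{(n')}_{ik}
$$
for $n' \geq n$ and zero otherwise. Summing over $j$ and using $\sum_j P^{(n)}_{ij} P^{(n'-n)}_{jk} = P^{(n')}_{ik}$ collapses the sum to $\sum_{j} S^{(n)}_{ij}(S^{(n)}_{ij})^* = p_i Q^{[n,\infty)}$. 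Applied with $n=1$, $i=x$, this reads $\sum_{y} S^{(1)}_{xy}(S^{(1)}_{xy})^* = p_x Q^{[1,\infty)}$; the sum is finite because $S^{(1)}_{xy} \neq 0$ forces $y \in x\cdot\supp(\mu)$. Hence
$$
Q^{(0)}_x = p_x - p_x Q^{[1,\infty)} = p_x - \sum_{y \in x\cdot\supp(\mu)} S^{(1)}_{xy}(S^{(1)}_{xy})^* \in \T(G,\mu).
$$

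For the ideal equality $\I_{\mathbb{K}} = \bigoplus_{z \in G} \mathbb{K}(\F_{P,z})$, I prove two inclusions. For the forward inclusion, observe that each $\F_{P,k}$ is reducing for every generator $S^{(n)}_{ij}$ (from the definition of the action), hence reducing for the entirety of $\T(G,\mu)$. Since $Q^{(0)}_z$ is the rank-one projection onto $\mathbb{C}\,e^{(0)}_{zz} \subset \F_{P,z}$, for any $a,b \in \T(G,\mu)$ the element $a Q^{(0)}_z b = |a e^{(0)}_{zz}\rangle \langle b^* e^{(0)}_{zz}|$ is a rank-one operator with both vectors in $\F_{P,z}$, hence lies in $\mathbb{K}(\F_{P,z}) \subseteq \bigoplus_w \mathbb{K}(\F_{P,w})$. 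Closing under linear combinations and norm limits gives $\I_{\mathbb{K}} \subseteq \bigoplus_z \mathbb{K}(\F_{P,z})$. For the reverse inclusion, compute directly that $S^{(n)}_{jk}Q^{(0)}_k = |e^{(n)}_{jk}\rangle \langle e^{(0)}_{kk}|$ and consequently
$$
S^{(n)}_{jk} \, Q^{(0)}_k \, (S^{(m)}_{j'k})^* = |e^{(n)}_{jk}\rangle \langle e^{(m)}_{j'k}|
$$
for all $k \in G$, $(j,k) \in E(P^n)$, $(j',k) \in E(P^m)$. These matrix units have dense span in $\mathbb{K}(\F_{P,k})$, so $\mathbb{K}(\F_{P,k}) \subseteq \I_{\mathbb{K}}$ for each $k$, and norm closure of the linear span gives $\bigoplus_z \mathbb{K}(\F_{P,z}) \subseteq \I_{\mathbb{K}}$.

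Finally, since $\mu$ is finitely supported, so is each convolution power $\mu^{*\ell}$; hence for fixed $x \in G$ and $\ell \in \bN$ the set $\{k : (x,k) \in E(P^\ell)\} = x \cdot \supp(\mu^{*\ell})$ is finite. This gives the finite decomposition
$$
Q^{(\ell)}_x = \sum_{k \,:\, (x,k)\in E(P^\ell)} |e^{(\ell)}_{xk}\rangle \langle e^{(\ell)}_{xk}|,
$$
and each summand is a rank-one operator in $\mathbb{K}(\F_{P,k}) \subseteq \I_{\mathbb{K}}$, so $Q^{(\ell)}_x \in \I_{\mathbb{K}}$. The only substantive issue in the proof is ensuring that the sum $\sum_{y} S^{(1)}_{xy}(S^{(1)}_{xy})^*$ producing $Q^{(0)}_x$ is genuinely a finite (or at least norm-convergent) sum inside $\T(G,\mu)$; this is exactly where the finite support of $\mu$ enters, and the same finiteness underlies the decomposition of $Q^{(\ell)}_x$.
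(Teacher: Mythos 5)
Your proposal is correct and follows essentially the same route as the paper: your identity $\sum_{y} S^{(1)}_{xy}(S^{(1)}_{xy})^{*} = p_x Q^{[1,\infty)}$ with $n=1$ reproduces exactly the paper's element $R^{(0)}_x = S^{(0)}_{x,x} - \sum_{(x,y)\in E(P)} S^{(1)}_{x,y}S^{(1)*}_{x,y}$, and your two-inclusion argument for $\I_{\mathbb{K}} = \oplus_{z}\mathbb{K}(\F_{P,z})$ (rank-one generators in $\F_{P,z}$ plus the matrix units $S^{(n)}_{jk}Q^{(0)}_k(S^{(m)}_{j'k})^{*}$) is just an explicit spelling-out of the paper's observation that $\T(G,\mu)e^{(0)}_{z,z} = \F_{P,z}$. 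The treatment of $Q^{(\ell)}_x$ via local finiteness of $\Gr(P^{\ell})$ is identical to the paper's.
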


\begin{proof}
Since $\Gr(P)$ is locally finite, for every $x\in G$ there are finitely many $y \in G$ such that $(x,y) \in E(P)$. Hence, for every $x\in G$ we have that
$$
R^{(0)}_x:= S^{(0)}_{x,x} - \sum_{(x,y)\in E(P)} S^{(1)}_{x,y}S^{(1)*}_{x,y} \in \T(G,\mu).
$$
Then, on a standard basis vector $e^{(m+1)}_{x,z}$ for $m \in \bN$ we get
$$
R^{(0)}_x(e^{(m+1)}_{x,z}) = e^{(m+1)}_{x,z} - \sum_{y\in G}\frac{P^{(n)}_{x,y}P^{(m)}_{y,z}}{P^{(n+m)}_{x,z}}e^{(m+1)}_{x,z} = 0,
$$
and since $R^{(0)}_x(e^{(0)}_{x,x}) = e^{(0)}_{x,x}$ for each $x\in G$, and $R^{(0)}_x(e^{(m)}_{y,z}) = 0$ if $x\neq y$, we get that $Q^{(0)}_x= R^{(0)}_x \in \T(G,\mu)$.

Since $\{Q^{(0)}_z\}_{z\in G}$ is a set of pairwise orthogonal rank-one projection, each onto the subspace $\mathbb{C}e^{(0)}_{z,z} \subseteq \F_{P,z}$, and since $\T(G,\mu) e^{(0)}_{z,z} = \F_{P,z}$, we see that the closed ideal $\lip Q^{(0)}_z \rip_{z\in G}$ is equal to $\oplus_{z \in G} \mathbb{K}(\F_{P,z})$.

Finally, since $\Gr(P^{\ell})$ is locally finite, for $x\in G$ we see that $Q^{(\ell)}_x: = p_x Q^{(\ell)} = Q^{(\ell)}p_x$ is finite rank, and $Q^{(\ell)}_x = \sum_{(x,z) \in E(P^{\ell})}Q^{(\ell)}_{x,z}$ where $Q^{(\ell)}_{x,z}$ is the rank-one projection onto the subspace $\mathbb{C}e_{x,z}^{(\ell)}$. Since $Q^{(\ell)}_{x,z} \in \mathbb{K}(\F_{P,z})$ for each $x,z\in G$ and $\ell \in \bN$, the proof is concluded.
\end{proof}

\begin{Rmk}
In Proposition \ref{p:viselter-subprod-ideal} we will see that $\I_{\mathbb{K}}$ coincides with Viselter's ideal of $\T(G,\mu)$, which is realized as the Toeplitz C*-algebra of a subproduct system arising from the random walk as in Section \ref{s:symmetry-uniqueness}.
\end{Rmk}

We will define an auxiliary C*-algebra $\widehat{\T}(G,\mu)$ and auxiliary operators $\{W^{(n)}_{x,y}\}$ and $\{T^{(n)}_{x,y}\}$ which will help make our computation of $\O(G,\mu)$ easier. Denote by $\J_{\mathbb{K}}:= \prod_{z \in G} \mathbb{K}(\F_{P,z})$, and let
$$
\widehat{\T}(G,\mu):= \T(G,\mu) + \J_{\mathbb{K}}.
$$
Since $\J(G,\mu) = \T(G,\mu) \cap \J_{\mathbb{K}}$ by definition, by \cite[Corollary I.5.6]{Davbook} we get that
$$
\bigslant{\widehat{\T}(G,\mu)}{[\J(G,\mu) + \J_{\mathbb{K}}]} \cong \O(G,\mu).
$$
Hence, even though some operators we define may not be in $\T(G,\mu)$, they will all be in $\widehat{\T}(G,\mu)$ so that their images in $\O(G,\mu)$ will make sense. More precisely, $q_P : \T(G,\mu) \rightarrow \O(G,\mu)$ extends to a well-defined quotient map (denoted still by) $q_P: \widehat{\T}(G,\mu) \rightarrow \O(G,\mu)$, and we denote for an operator $T\in \widehat{\T}(G,\mu)$ its image in $\O(G,\mu)$ by $\overline{T} := q_P(T)$.

\begin{Prop} \label{p:norm-computation}
Let $P$ be a random walk on a group $G$ induced by a finitely supported measure $\mu$. Then, for any $T\in \widehat{\T}(G,\mu)$ we have that
$$
\|\overline{T} \| = \sup_{z\in G} \lim_m \| T Q^{[m,\infty)}|_{\F_{P,z}} \|
$$
\end{Prop}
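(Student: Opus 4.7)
The plan is to realise $\O(G,\mu)$ as a C*-subalgebra of an $\ell^\infty$-product modulo the product ideal $\J_{\mathbb{K}}$, compute the quotient norm there as a supremum of Calkin norms on each fibre $\F_{P,z}$, and then express the Calkin norm on each $\F_{P,z}$ through the tails $Q^{[m,\infty)}|_{\F_{P,z}}$.

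First, I would note that by definition $\J(G,\mu) = \T(G,\mu) \cap \J_{\mathbb{K}} \subseteq \J_{\mathbb{K}}$, so $\J(G,\mu) + \J_{\mathbb{K}} = \J_{\mathbb{K}}$, and the isomorphism given in the paper simplifies to
\[
\O(G,\mu) \cong \widehat{\T}(G,\mu)/\J_{\mathbb{K}}.
\]
Next, because every generator $S^{(n)}_{ij}$ preserves the reducing subspace $\F_{P,z}$ (which is immediate from the defining formula), $\widehat{\T}(G,\mu) = \T(G,\mu) + \J_{\mathbb{K}}$ sits inside the block-diagonal algebra $\prod_{z\in G} B(\F_{P,z})$. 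Inside this ambient product, $\J_{\mathbb{K}} = \prod_{z\in G}\mathbb{K}(\F_{P,z})$ is a closed ideal, and a standard argument shows that the quotient norm in $\prod_z B(\F_{P,z})/\prod_z \mathbb{K}(\F_{P,z})$ factors as
\[
\|T + \J_{\mathbb{K}}\| \;=\; \sup_{z\in G} \|T|_{\F_{P,z}} + \mathbb{K}(\F_{P,z})\|.
\]
Since the inclusion $\widehat{\T}(G,\mu)/\J_{\mathbb{K}} \hookrightarrow \prod_z B(\F_{P,z})/\J_{\mathbb{K}}$ is a well-defined injective $*$-homomorphism between C*-algebras and is therefore isometric, this supremum computes $\|\overline{T}\|$.

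Finally, for each fixed $z \in G$, Proposition \ref{p:viselter-ideal} (more precisely, the local finiteness of each $\Gr(P^\ell)$ used there) implies that $I|_{\F_{P,z}} - Q^{[m,\infty)}|_{\F_{P,z}} = \sum_{\ell < m} Q^{(\ell)}|_{\F_{P,z}}$ is a finite-rank projection, and the family $\{I|_{\F_{P,z}} - Q^{[m,\infty)}|_{\F_{P,z}}\}_m$ increases strongly to $I|_{\F_{P,z}}$. Hence it is an increasing approximate unit of finite-rank projections for $\mathbb{K}(\F_{P,z})$, and the standard essential-norm identity gives
\[
\|T|_{\F_{P,z}} + \mathbb{K}(\F_{P,z})\| \;=\; \lim_m \|T Q^{[m,\infty)}|_{\F_{P,z}}\|,
\]
where the limit exists (and is monotone decreasing in $m$) because $Q^{[m+1,\infty)} \le Q^{[m,\infty)}$. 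Combining the two displayed equalities yields the claim.

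The main obstacle I anticipate is bookkeeping at the level of the ambient product: one must check that $\widehat{\T}(G,\mu)$ really does embed as block-diagonal operators, that $\J_{\mathbb{K}}$ is an ideal there, and that the quotient norm for $\widehat{\T}(G,\mu)/\J_{\mathbb{K}}$ is inherited isometrically from the larger quotient by $\prod_z \mathbb{K}(\F_{P,z})$. Once this is in place, the fibrewise essential-norm computation is routine.
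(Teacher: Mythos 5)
Your proposal is correct, and at bottom it runs on the same two facts as the paper's proof: that $\|K Q^{[m,\infty)}|_{\F_{P,z}}\| \to 0$ for $K$ compact on each fibre (giving the lower bound), and that $T(I-Q^{[m,\infty)})|_{\F_{P,z}}$ is finite rank by local finiteness of the walk (giving the upper bound). The difference is one of packaging: the paper proves the two inequalities directly for the quotient norm of $\widehat{\T}(G,\mu)$ by $\J_{\mathbb{K}}$, choosing the cut-off $m_z$ separately in each fibre, whereas you outsource them to two standard facts — the isometric embedding of $\widehat{\T}(G,\mu)/\J_{\mathbb{K}}$ into $\prod_{z}B(\F_{P,z})/\prod_z \mathbb{K}(\F_{P,z}) \cong \prod_z \mathcal{Q}(\F_{P,z})$ with the supremum norm, and the essential-norm formula on each fibre via the increasing finite-rank approximate unit $I-Q^{[m,\infty)}|_{\F_{P,z}}$. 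Your version is more modular and makes transparent why the supremum and the limit can be taken in that order (the fibre-wise essential norm handles each $z$ independently, which is exactly the role of the $z$-dependent cut-offs $(m_z)$ in the paper). One small point to make explicit: the finite-rank claim for $Q^{(\ell)}|_{\F_{P,z}}$ uses local finiteness in the \emph{incoming} direction (finitely many $j$ with $(j,z)\in E(P^\ell)$ for fixed $z$), which holds for a finitely supported $\mu$ just as the outgoing version used in Proposition \ref{p:viselter-ideal} does, but is not literally the statement proved there.
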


\begin{proof}
For every $\epsilon > 0$ there is some $K \in \J_{\mathbb{K}}$ such that 
\begin{align*}
\|\overline{T} \| \geq \|T + K \| - \epsilon = \sup_{z\in G} \| [T + K]|_{\F_{P,z}} \| - \epsilon \geq \\
\sup_{z\in G} \lim_m \| [T + K]Q^{[m,\infty)}|_{\F_{P,z}}\| - \epsilon 
\end{align*}
But since for every $m\in \bN$ and $z\in G$ we have
$$
\| [T + K]Q^{[m,\infty)}|_{\F_{P,z}}\| \geq 
\| TQ^{[m,\infty)}|_{\F_{P,z}}\| - \| KQ^{[m,\infty)}|_{\F_{P,z}}\|,
$$
by taking $m\rightarrow \infty$, and as $K|_{\F_{P,z}} \in \mathbb{K}(\F_{P,z})$ for $z\in G$, we get that
\begin{equation*}
\|\overline{T} \| \geq \sup_{z\in G} \lim_m \| TQ^{[m,\infty)}|_{\F_{P,z}}\| - \epsilon.
\end{equation*}
Hence, we arrive at the lower bound $\|\overline{T} \| \geq \sup_{z\in G} \lim_m \| TQ^{[m,\infty)}|_{\F_{P,z}}\|$.

On the other hand, for every $T \in \widehat{\T}(G,\mu)$ and a sequence of natural numbers $(m_z)_{z\in G}$, we get by local finiteness of $\Gr(P)$ that the operator $T|_{\F_{P,z}} \cdot (I - Q^{[m_z,\infty)})|_{\F_{P,z}}$ on $\F_{P,z}$ is finite rank, so that
$$
T_0:= \underset{z\in G}{\oplus}\Big[T|_{\F_{P,z}} \cdot (I - Q^{[m_z,\infty)})|_{\F_{P,z}} \Big] \in \J_{\mathbb{K}} = \prod \mathbb{K}(\F_{P,z}).
$$
Thus, we get for any sequence of natural numbers $(m_z)_{z\in G}$ that
$$
\| \overline{T} \| \leq \| T - T_0 \| = \sup_{z\in G} \| TQ^{[m_z,\infty)}|_{\F_{P,z}} \|.
$$
Since $(m_z)_{z\in G}$ is arbitrary, we get the upper bound
$$
\| \overline{T} \| \leq \sup_{z\in G} \lim_m \| TQ^{[m,\infty)}|_{\F_{P,z}} \|.
$$
Combined with the the previously obtained lower, we get our result.
\end{proof}

Suppose that $P$ is a random walk with SRLP on a group $G$ induced by a finitely supported measure $\mu$. Then, for any $n\in \bN$ and $(x,y) \in E(P^n)$ we define two operators $W_{x,y}^{(n)}$ and $T_{x,y}^{(n)}$ on $\F_P$ by setting for $(y',z) \in E(P^m)$,
$$
W_{x,y}^{(n)}(e_{y',z}^{(m)}) = \delta_{y,y'} \cdot \sqrt{H(x^{-1}y,x^{-1}z)} \cdot e_{x,z}^{(m+n)}, 
$$ 
and $T_{x,y}^{(n)}:= \Big[ \frac{\rho(P)}{P^{(n)}_{x,y}}\Big]^{\frac{n}{2}} S^{(n)}_{x,y} \in \T(P)$, alternatively given by the formula 
$$
T^{(n)}_{x,y}(e^{(m)}_{y',z}) = \delta_{y,y'} \sqrt{\frac{\rho(P)^nP^{(m)}_{y,z}}{P^{(n+m)}_{x,z}}}e^{(n+m)}_{x,z}.
$$ 
Boundedness of the operators $W_{x,y}^{(n)}$ and $T_{x,y}^{(n)}$ can be observed from the estimates in Section \ref{s:ratio-limit space} and the fact that the ratio-limit functions are bounded. Then, their adjoints are given for $x',z \in G$ by $W^{(n)*}_{x,y}(e^{(m)}_{x',z}) = T^{(n)*}_{x,y}(e^{(m)}_{x',z}) = 0$ for $m < n$ and otherwise for $m\in \bN$ we have
$$
W^{(n)*}_{x,y} (e^{(n+m)}_{x',z}) = \begin{cases} 
\delta_{x,x'} \cdot \sqrt{H(x^{-1}y,x^{-1}z)} \cdot e^{(m)}_{y,z} &\mbox{if } (y,z)\in E(P^m) \\
0 & \mbox{if } \ \text{otherwise}. \end{cases}
$$
and
$$
T^{(n)*}_{x,y}(e^{(n+m)}_{x',z}) = \delta_{x,x'} \sqrt{\frac{\rho(P)^nP^{(m)}_{y,z}}{P^{(n+m)}_{x,z}}}e^{(m)}_{y,z}.
$$

\begin{Prop} \label{p:indep-of-m}
Suppose $P$ is a random walk on a group $G$ induced by a finitely supported measure $\mu$, and assume $P$ has SRLP. Then for every $n\in \bN$ and $(x,y)\in E(P^n)$ we have that $T^{(n)}_{x,y} - W^{(n)}_{x,y}\in \J_{\mathbb{K}}$. In particular, we get that $W^{(n)}_{x,y} \in \widehat{\T}(G,\mu)$.
\end{Prop}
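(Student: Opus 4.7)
The strategy is to compare $T^{(n)}_{x,y}$ and $W^{(n)}_{x,y}$ on basis vectors, where they have identical support and the same target but different scalar coefficients, and then show that the difference of coefficients tends to zero along each $\F_{P,z}$. This should make the difference operator a weighted ``shift'' on $\F_{P,z}$ whose weights vanish at infinity, hence compact, giving membership in $\prod_z \mathbb{K}(\F_{P,z}) = \J_{\mathbb{K}}$.

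First, I would record that both $T^{(n)}_{x,y}$ and $W^{(n)}_{x,y}$ annihilate every basis vector $e^{(m)}_{y',z}$ with $y' \neq y$, and that on $e^{(m)}_{y,z}$ each sends it to a scalar multiple of $e^{(n+m)}_{x,z}$. Consequently, $T^{(n)}_{x,y} - W^{(n)}_{x,y}$ acts on $\F_{P,z}$ by
\[
e^{(m)}_{y,z} \; \longmapsto \; a_{m,z} \cdot e^{(n+m)}_{x,z}, \qquad a_{m,z} \;:=\; \sqrt{\tfrac{\rho^{n} P^{(m)}_{y,z}}{P^{(n+m)}_{x,z}}} \;-\; \sqrt{H(x^{-1}y,\,x^{-1}z)},
\]
and sends every other basis vector to zero. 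Since the image basis vectors $\{e^{(n+m)}_{x,z}\}_m$ are mutually orthogonal, the restriction of the difference to $\F_{P,z}$ is a weighted shift with weights $(a_{m,z})_{m \geq 0}$.

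The heart of the argument is then to verify that $a_{m,z} \to 0$ as $m \to \infty$ for each fixed $z$. Rewriting the radicand using $P^{(m)}_{y,z} = \mu^{*m}(y^{-1}z)$ and $P^{(n+m)}_{x,z} = \mu^{*(n+m)}(x^{-1}z)$, I would factor
\[
\frac{\rho^{n}\mu^{*m}(y^{-1}z)}{\mu^{*(n+m)}(x^{-1}z)} \;=\; \frac{\mu^{*m}(y^{-1}z)}{\mu^{*m}(x^{-1}z)} \cdot \frac{\rho^{n}\mu^{*m}(x^{-1}z)}{\mu^{*(n+m)}(x^{-1}z)}.
\]
By SRLP (applied with the relation $H(x^{-1}y,x^{-1}z) = \lim_m \mu^{*m}(y^{-1}z)/\mu^{*m}(x^{-1}z)$ coming from $G$-invariance) the first factor tends to $H(x^{-1}y, x^{-1}z)$, while Gerl's theorem (\cite[Satz 1]{Ger78}, invoked at the start of Section \ref{s:ratio-limit space}) yields $\mu^{*(n+m)}(x^{-1}z)/\mu^{*m}(x^{-1}z) \to \rho^{n}$. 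Multiplying shows the radicand converges to $H(x^{-1}y,x^{-1}z)$, so $a_{m,z}\to 0$ by continuity of the square root.

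A weighted shift with square-summability replaced by mere vanishing of weights is compact on $\F_{P,z}$; moreover, since $T^{(n)}_{x,y}$ and $W^{(n)}_{x,y}$ are each bounded operators on all of $\F_P$, the norms $\|(T^{(n)}_{x,y} - W^{(n)}_{x,y})|_{\F_{P,z}}\|$ are uniformly bounded in $z$. Hence $T^{(n)}_{x,y} - W^{(n)}_{x,y} \in \prod_{z\in G}\mathbb{K}(\F_{P,z}) = \J_{\mathbb{K}}$. Since $T^{(n)}_{x,y} \in \T(G,\mu)$, the ``in particular'' statement follows because $W^{(n)}_{x,y} = T^{(n)}_{x,y} - (T^{(n)}_{x,y} - W^{(n)}_{x,y}) \in \T(G,\mu) + \J_{\mathbb{K}} = \widehat{\T}(G,\mu)$. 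The main (and essentially only) obstacle is ensuring the limit computation for $a_{m,z}$ is legitimate; this hinges on the boundedness estimates for the sequences $\mu^{*m}(x^{-1}z)/\mu^{*m}(z)$ established in Section \ref{s:ratio-limit space}, which also justify the use of the square root.
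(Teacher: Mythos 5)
Your proposal is correct and follows essentially the same route as the paper: restrict the difference to each $\F_{P,z}$, observe it is a weighted shift (a sum of rank-one maps $\F^{(m)}_{P,z}\to\F^{(m+n)}_{P,z}$) with weights $a_{m,z}$, and use the Section \ref{s:ratio-limit space} estimates to show $a_{m,z}\to 0$, giving compactness on each fiber and hence membership in $\J_{\mathbb{K}}$. The only difference is cosmetic: you spell out the factorization into the SRLP limit and the Gerl ratio $\mu^{*(n+m)}/\mu^{*m}\to\rho^{n}$, which the paper leaves as a citation to its earlier estimates.
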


\begin{proof}
Fix $z \in G$. It will suffice to show that the restriction of $T^{(n)}_{x,y} - W^{(n)}_{x,y}$ to $\F_{P,k}$ is compact. Let $m\in \bN$. If $(y,z) \notin E(P^m)$, then $T^{(n)}_{x,y} - W^{(n)}_{x,y}$ is zero on $\F_{P,z}^{(m)}$. If $(y,z) \in E(P^m)$, then $e^{(m)}_{y,z}$ is the only standard basis vector of $\F^{(m)}_P$ which is not annihilated by $T^{(n)}_{x,y} - W^{(n)}_{x,y}$. In this case, we get that
$$
\|[T^{(n)}_{x,y} - W^{(n)}_{x,y}](e^{(m)}_{y,z}) \| = \Big{|}\sqrt{\frac{\rho(P)^nP^{(m)}_{y,z}}{P^{(n+m)}_{x,z}}} - \sqrt{H(x^{-1}y,x^{-1}z)}\Big{|}.
$$
However, since $T^{(n)}_{x,y} - W^{(n)}_{x,y}$ is at most a rank-one operator when restricted to an operator from $\F^{(m)}_{P,z}$ to $\F^{(m+n)}_{P,z}$, it will suffice to show that as $m \rightarrow \infty$, the above goes to $0$. But now, the estimates in Section \ref{s:ratio-limit space} (up to applying a square root) establish this convergence.
\end{proof}

\begin{Rmk}
It is at this point where we see the importance of defining $\O(G,\mu)$ as a quotient by $\J_{\mathbb{K}} \cap \T(G,\mu)$ as opposed to a quotient by $\I_{\mathbb{K}} = \oplus_{z\in G} \mathbb{K}(\F_{P,z}) \lhd \T(G,\mu)$. It turns out that in most cases $\O(G,\mu)$ is a proper quotient of $\T(G,\mu) / \I_{\mathbb{K}}$. This is because of the following reasoning.

When $G$ is infinite, since $\mu$ is finitely supported, for each $x,y \in G$ and $m \in \bN$ we may always choose $z$ for which $P^{(m)}_{x,z} = P^{(m)}_{y,z} = 0$. Hence, we see that the convergence 
$$
\frac{\rho(P)^nP^{(m)}_{y,z}}{P^{(n+m)}_{x,z}} \underset{m\rightarrow \infty}{\longrightarrow} H(x^{-1}y,x^{-1}z)
$$ 
is never uniform in $z$, and we get that $T^{(n)}_{x,y} - W^{(n)}_{x,y} \notin \I_{\mathbb{K}}$. On the other hand we have shown above that $T^{(n)}_{x,y} - W^{(n)}_{x,y} \in \J_{\mathbb{K}}$. Thus, in order to show a proper inclusion $\I_{\mathbb{K}} \subsetneq \J(G,\mu)$, it will suffice to show that $W^{(n)}_{x,y} \in \T(G,\mu)$, so that $T^{(n)}_{x,y} - W^{(n)}_{x,y}$ is in $\J(G,\mu) = \T(G,\mu) \cap \J_{\mathbb{K}}$ but not in $\I_{\mathbb{K}}$. 

This can be done for instance when $R_{\mu} = G$, so that all ratio-limit functions $\{H(x,\cdot)\}_{x\in G}$ are constant $1$. Indeed, one can show that $W^{(n)}_{x,y}$ is the partial isometry in the polar decomposition $T^{(n)}_{x,y} = W^{(n)}_{x,y} A$ where $A \in \T(G,\mu)$ is positive with $\sigma(A)$ bounded away from $0$. Continuous functional calculus can then used to show $W^{(n)}_{x,y} \in \T(G,\mu)$, with similar techniques as below.
\end{Rmk}

Next, for $(x,y) \in E(P^n)$ we denote $R_{x,y}:= R^{(n)}_{x,y} = \sqrt{W_{x,y}^{(n)*}W_{x,y}^{(n)}} \in \widehat{\T}(G,\mu)$. By definition, we get for $(y',z) \in E(P^m)$ that,
$$
R_{x,y}(e^{(m)}_{y',z}) = \delta_{y,y'} \cdot H(x^{-1}y,x^{-1}z)^{\frac{1}{2}} \cdot e^{(m)}_{y,z}.
$$
But now, since $(x,y)\in E(P^n)$ are fixed, by estimates in Section \ref{s:ratio-limit space} there are $c_{x,y}, C_{x,y} >0$ such that $0< c_{x,y} \leq H(x^{-1}y,x^{-1}z) \leq C_{x,y} < \infty$ for all $z\in G$. Hence, we get that $\sigma(R_{x,y}) \subseteq [c^{1/2}_{x,y}, C^{1/2}_{x,y}]$, and by applying the non-negative continuous function 
$$
t\mapsto \begin{cases}
0 & t\in (-\infty,0) \\
t \cdot c_{x,y}^{-1} & t \in [0, c^{1/2}_{x,y}] \\
t^{-1} & t\in [c^{1/2}_{x,y}, C^{1/2}_{x,y}] \\
C_{x,y}^{-1/2} & t \in (C^{1/2}_{x,y},\infty)
\end{cases}
$$ 
to the positive operator $R_{x,y}$, we get the positive operator $R_{x,y}' \in \widehat{\T}(G,\mu)$ given for $(y',z) \in E(P^m)$ by
$$
R_{x,y}'(e_{y',z}^{(m)}) = \delta_{y,y'} \cdot H(x^{-1}y,x^{-1}z)^{-\frac{1}{2}} \cdot e_{y,z}^{(m)}.
$$
But then, $V^{(n)}_{x,y} = W^{(n)}_{x,y} R_y' \in \widehat{\T}(G,\mu)$ is given by
$$
V_{x,y}^{(n)}(e_{y',z}^{(m)}) = \delta_{y,y'} e_{x,z}^{(m+n)}. 
$$

Now fix $x,y,z \in G$. Since $P$ has SRLP it must be aperiodic, so there exists $n_0$ (depending on $x,y$ and $z$) such that $(x,x), (x,y), (y,y),(y,z), (z,z) \in E(P^{n})$ for all $n \geq n_0$. Thus, we may define the following operators:
\begin{enumerate}
\item $E_{x,y} = V_{x,x}^{(n)*}V_{x,y}^{(n)}$

\item $U_x = V_{x,x}^{(n)*}V_{x,x}^{(n+1)}$, and let $U = \oplus_{x\in G} U_x$. 

\item $H^{(z)}_{x,y} = E_{z,y} R^{(n)}_{x,y}E_{y,z}$, and let $H_{x,y}: = \oplus_{z\in G} H^{(z)}_{x,y}$.
\end{enumerate}

It is readily verified that the definitions of $E_{x,y}$, $U_x$ and $H^{(z)}_{x,y}$ are independent of $n\geq n_0$ modulo $\J_{\mathbb{K}}$, by showing that the the restrictions to $\F_{P,z}$ of differences (with different values of $n \geq n_0$) are in $\mathbb{K}(\F_{P,z})$ for each $z\in G$.

For an operator $T \in \prod_{z\in G} B(\F_{P,z})$ we denote by $\overline{T}$ its image in the Calkin quotient $\prod_{z\in G} B(\F_{P,z}) / \prod_{z\in G} \mathbb{K}(\F_{P,z}) \cong \prod_{z\in G} \Q(\F_{P,z})$, so that when $T\in \widehat{\T}(G,\mu)$ we have that $\overline{T} \in \O(G,\mu)$.

\begin{Prop} \label{P:relations-up-to-cpts}
Let $P$ be a random walk on a group induced by a finitely supported $\mu$, and assume $P$ has SRLP. Then, 
\begin{enumerate}
\item the family of operators $\{\overline{E}_{x,y}\}$ is a $G\times G$ system of matrix units.

\item the family $\{\overline{E}_{x,y}\}$ commutes with $\{\overline{H}_{x,y}\}$ and $\overline{U}$.

\item for each $x, y \in G$ we have $\overline{H}_{x,y}\overline{U} = \overline{U} \overline{H}_{x,y}$.

\item $\overline{U}$ is a unitary element, and each $\overline{U}_x$ has spectrum $\sigma(\overline{U}_x) = \mathbb{T} \cup \{0\}$.

\item $\O(G,\mu)$ is generated by $\{\overline{E}_{x,y}\}_{x,y\in G}$, $\{\overline{H}^{(e)}_{x,y}\}_{x,y \in G}$ and $\overline{U}_e$.

\end{enumerate}
\end{Prop}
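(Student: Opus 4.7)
The plan is to prove all five assertions by direct computation of the actions of the operators on standard basis vectors $e^{(m)}_{y',z}$ of $\F_P$, exploiting that the errors from the small-$m$ aperiodicity thresholds are finite-rank on each column $\F_{P,z}$ and thus lie in $\J_\mathbb{K}$. Using the definitions of $V^{(n)}_{x,y}$ and $R^{(n)}_{x,y}$, one checks that
\[
E_{x,y}(e^{(m)}_{y',z}) = \delta_{y,y'}\, e^{(m)}_{x,z}, \qquad U(e^{(m)}_{y',z}) = e^{(m+1)}_{y',z},
\]
\[
H^{(z_0)}_{x,y}(e^{(m)}_{y',z}) = \delta_{z_0,y'}\, H(x^{-1}y, x^{-1}z)^{1/2}\, e^{(m)}_{y',z},
\]
so that $H_{x,y}(e^{(m)}_{y',z}) = H(x^{-1}y, x^{-1}z)^{1/2}\, e^{(m)}_{y',z}$ is a scalar multiplier depending only on the column index $z$. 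The crucial qualitative feature is that all three families preserve the column decomposition $\F_P = \bigoplus_z \F_{P,z}$.

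Given these formulas, parts (1)--(4) follow by direct verification. The matrix-unit identities $E_{x,y}E_{u,v} = \delta_{y,u}\, E_{x,v}$ and $E_{x,y}^* = E_{y,x}$ are immediate on basis vectors, giving (1). Since $H_{x,y}$ acts as a scalar on each $\F_{P,z}$, it commutes automatically with any operator preserving the column decomposition; hence (2) and (3) reduce to the check that $E_{u,v}$ commutes with $U$, which is a one-line calculation on basis vectors. For (4), one computes $U^*U(e^{(m)}_{y',z}) = e^{(m)}_{y',z}$ for all $m \geq 0$, while $UU^* = I - Q^{(0)}$ with $Q^{(0)} = \sum_x Q^{(0)}_x$ rank-one on each $\F_{P,z}$; thus $Q^{(0)} \in \J_\mathbb{K}$ and $\overline{U}$ is unitary in $\O(G,\mu)$.

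For (5), the key step is the factorization
\[
W^{(n)}_{x,y} \;\equiv\; E_{x,e}\, H^{(e)}_{x,y}\, U_e^{\,n}\, E_{e,y} \pmod{\J_\mathbb{K}},
\]
verified on $e^{(m)}_{y',z}$ by tracing the composition: $E_{e,y}$ lowers the $y$-row down to the $e$-row, $U_e^n$ shifts the length in the $e$-row from $m$ to $m+n$, $H^{(e)}_{x,y}$ multiplies by $H(x^{-1}y, x^{-1}z)^{1/2}$, and $E_{x,e}$ raises the $e$-row back up to the $x$-row, reproducing the formula $W^{(n)}_{x,y}(e^{(m)}_{y',z}) = \delta_{y,y'}\, H(x^{-1}y, x^{-1}z)^{1/2}\, e^{(m+n)}_{x,z}$. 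Since $\O(G,\mu)$ is generated by the images of $S^{(n)}_{x,y}$, which are scalar multiples of $\overline{T}^{(n)}_{x,y} = \overline{W}^{(n)}_{x,y}$ by Proposition \ref{p:indep-of-m}, this shows that the proposed generators suffice, completing (5).

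The main obstacle is uncovering the correct factorization in (5)---identifying that the ``twisted shift'' $W^{(n)}_{x,y}$ decomposes as a column-lift after shift-on-the-$e$-row after diagonal-multiplier after column-lower. Beyond this insight, every identity reduces to chasing a single basis vector, with the finite-$m$ discrepancies manifestly absorbed into $\J_\mathbb{K}$.
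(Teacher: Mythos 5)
Your proof is correct and takes essentially the same route as the paper: verify every relation on standard basis vectors and absorb the finitely many low-degree discrepancies per column $\F_{P,z}$ into $\J_{\mathbb{K}}$, with the same key factorization of $\overline{W}^{(n)}_{x,y}$ in terms of $\overline{E}_{x,e}$, $\overline{H}^{(e)}_{x,y}$, $\overline{E}_{e,y}$ and a power of the shift (your placement of $U_e^n$ in the middle agrees with the paper's $U_x^n$ in front modulo $\J_{\mathbb{K}}$ by the commutation relations). The only caveat is that your exact identities in (4) ($U^*U=I$ and $UU^*=I-Q^{(0)}$ on the nose) hold only up to the same finite-rank-per-column corrections, since $U$ is a partial shift whenever $(y',z)\notin E(P^{m+1})$; this does not affect the conclusion.
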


\begin{proof}
We first show $(1)$. Let $x,y,y',z \in G$. Then, by aperiodicity of $P$, for fixed $w\in G$ there is $m_0$ large enough so that $(y,w), (x,w), (x,x), (y',y') \in E(P^m)$ for $m\geq m_0$. Hence, whenever $(z',w)\in E(P^m)$ and $m\geq m_0$ we have,
$$
E_{x,y} E_{y',z}(e^{(m)}_{z',w}) = \delta_{z,z'} E_{x,y}(e^{(m)}_{y',w}) = \delta_{y,y'} \delta_{z,z'} e^{(m)}_{x,w} = \delta_{y,y'} E_{x,z}(e^{(m)}_{z',w}).
$$
Hence, we get that $E_{x,y} E_{y',z} - E_{x,z} \in \J_{\mathbb{K}}$. A similar computation shows that $E_{x,y}^* - E_{y,x} \in \J_{\mathbb{K}}$ as well. Hence, $\{\overline{E}_{x,y}\}$ is a $G\times G$ system of matrix units.

Next, we show $(2)$. Indeed, by item $(1)$ we have for $x,x',y,y' \in G$ that
$$
\overline{E}_{x,y} \overline{H}_{x',y'} = \overline{E}_{x,y'}\overline{R}_{x',y'}\overline{E}_{y',y} = \overline{H}_{x',y'}\overline{E}_{x,y}.
$$
To show that $\{\overline{E}_{x,y}\}$ commutes with $\overline{U}$ it will suffice to show that $E_{x,y}U_y - U_x E_{x,y} \in \J_{\mathbb{K}}$. So, for fixed $z \in G$, by aperiodicity of $P$ there is $m_0$ large enough so that $(x,z), (y,z) \in E(P^m)$ for all $m\geq m_0$. Hence, whenever $(y',z) \in E(P^m)$ for $m \geq m_0$ we have
$$
E_{x,y}U_y(e^{(m)}_{y',z}) = \delta_{y,y'} e^{(m+1)}_{x,z} = U_x E_{x,y}(e^{(m)}_{y',z}).
$$

Now, we show item $(3)$. By aperiodicity of $P$, for fixed $z\in G$ there is $m_0$ large enough so that $(y',z) \in E(P^m)$ for all $m\geq m_0$, so that
$$
H_{x,y}U(e^{(m)}_{y',z}) = \sqrt{H(x^{-1}y,x^{-1}z)} \cdot e^{(m+1)}_{y',z} = U H_{x,y}(e^{(m)}_{y',z}).
$$
Thus, we get that $H_{x,y}U - UH_{x,y} \in \J_{\mathbb{K}}$.

To show item $(4)$, fix $z\in G$, so that by aperiodicity there is $m_0$ large enough so that $(y,z) \in E(P^m)$ for all $m\geq m_0$. Hence, for any $m\geq m_0+1$ and $(y,z) \in E(P^m)$ we have
$$
U^*U(e^{(m)}_{y,z}) = e^{(m)}_{y,z} = UU^*(e^{(m)}_{y,z}).
$$
Thus, we get that $U^*U - I, UU^* - I \in \J_{\mathbb{K}}$. Since $U_y$ acts as the unilateral shift on the orthonormal set $\{e_{y,x}^{(m)}\}_{m\geq m_0}$, it follows that $\mathbb{D} \subseteq \sigma(U_y)$. Since $U_y$ is the compression of an essential unitary to one of its reducing subspaces, it must be a normal partial isometry, and we get that $\sigma(\overline{U}_y) = \mathbb{T} \cup \{0\}$.

Finally, we show item $(5)$. First note that by construction the operators $\{\overline{E}_{x,y}\}_{x,y\in G}$, $\{\overline{H}^{(e)}_{x,y}\}_{x,y \in G}$ and $\overline{U}_e$ are indeed in $\O(G,\mu)$. To show that these operators generate $\O(G,\mu)$ as a C*-algebra, first note that by Proposition \ref{p:indep-of-m} we have that $\{\overline{W}_{x,y}\}$ are generators for $\O(G,\mu)$. Then, it will suffice to establish for $x,y \in G$ and $n\in \bN$ that
$$
\overline{W}^{(n)}_{x,y} = \overline{V}_{x,y}^{(n)} \overline{R}_{x,y} = \overline{U}_x^n \overline{E}_{x,e}\overline{H}_{x,y}^{(e)} \overline{E}_{e,y}.
$$
So, for a fixed $z\in G$, by aperiodicity of $P$ there is $m_0$ large enough so that $(e,z), (y,z), (x,z) \in E(P^m)$ for all $m\geq m_0$. Hence, for $(y',z) \in E(P^m)$ and $m\geq m_0$ we have
$$
W^{(n)}_{x,y}(e^{(m)}_{y',z}) = \delta_{y,y'} \cdot \sqrt{H(x^{-1}y,x^{-1}z)} \cdot e^{(m+n)}_{x,z} = 
$$
$$
U_x^n \big( \delta_{y,y'} \cdot \sqrt{H(x^{-1}y,x^{-1}z)} \cdot e^{(m)}_{x,z} \big) = U_x^n E_{x,e} H^{(e)}_{x,y} E_{e,y} (e^{(m)}_{y',z}).
$$
Thus, we see that $W^{(n)}_{x,y} - U_x^n E_{x,e} H^{(e)}_{x,y} E_{e,y} \in \J_{\mathbb{K}}$, and the proof is concluded.
\end{proof}

Recall that $Q^{(m)}$ denotes the orthgonal projection from $\F_P$ onto $\F_P^{(m)}$, and that $Q^{[m,\infty)} := \sum_{\ell=m}^{\infty} Q^{(\ell)}$ is the projection from $\F_P$ onto $\oplus_{\ell=m}^{\infty} \F_P^{(\ell)}$.

\begin{Thm} \label{t:algebra-computation}
Let $P$ be a random walk on a group $G$ induced by a finitely supported measure $\mu$, and assume $P$ has SRLP. Then, 
$$
\O(G,\mu) \cong  C(\mathrm{R}(G,\mu) \times \mathbb{T}) \otimes \mathbb{K}(\ell^2(G)).
$$
\end{Thm}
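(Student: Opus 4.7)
The plan is to use the isometric embedding $\O(G,\mu) \hookrightarrow \prod_{z \in G} \Q(\F_{P,z})$ provided by Proposition \ref{p:norm-computation}, identify the image fiber-by-fiber, and then glue the fibers together. Fix $z \in G$. Aperiodicity of $P$ (a consequence of SRLP) ensures that each subspace $\H_{j,z} := \overline{\spn}\{e^{(m)}_{j,z} : m \geq 0,\ (j,z) \in E(P^m)\}$ of $\F_{P,z}$ is infinite-dimensional, so one obtains a unitary identification $\F_{P,z} \cong \ell^2(G) \otimes \ell^2(\bN)$ sending $e^{(m)}_{j,z}$ to $\delta_j \otimes \delta_m$ (up to finitely many ``missing'' low-level vectors, which vanish after passing to the Calkin quotient). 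Under this identification, a direct computation using the formulas from Section \ref{S:C*-alg-for-Markov-chains} together with Proposition \ref{P:relations-up-to-cpts} shows that the generators of $\O(G,\mu)$ listed in Proposition \ref{P:relations-up-to-cpts}(5) correspond, modulo $\mathbb{K}(\F_{P,z})$, to the operators $\overline{E}_{x,y} \leftrightarrow e_{x,y} \otimes \mathbf{1}$, $\overline{U}_e \leftrightarrow e_{e,e} \otimes S$ (with $S$ the unilateral shift), and $\overline{H}^{(e)}_{x,y} \leftrightarrow H(x^{-1}y, x^{-1}z)^{1/2}\, e_{e,e} \otimes \mathbf{1}$.

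Next, I would identify the image in each fiber $\Q(\F_{P,z})$. Matrix-unit manipulations show that the products $\overline{E}_{x,e}\, \overline{U}_e^n\, \overline{E}_{e,y}$ and their adjoints realize $e_{x,y} \otimes S^n$ and $e_{x,y} \otimes S^{*n}$ for all $x,y \in G$ and $n \geq 0$; together with the matrix units themselves, these generate $\mathbb{K}(\ell^2(G)) \otimes \mathrm{Toep}$ inside $B(\F_{P,z})$, where $\mathrm{Toep}$ is the classical Toeplitz algebra on $\ell^2(\bN)$. The standard tensor-product identity $\mathbb{K}(\F_{P,z}) \cap [\mathbb{K}(\ell^2(G)) \otimes \mathrm{Toep}] = \mathbb{K}(\ell^2(G)) \otimes \mathbb{K}(\ell^2(\bN))$ then forces the image of $\O(G,\mu)$ in $\Q(\F_{P,z})$ to be exactly $\mathbb{K}(\ell^2(G)) \otimes C(\bT)$; the scalar multipliers contributed by $\overline{H}^{(e)}_{x,y}$ are absorbed as complex scalars in this algebra.

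Finally, to glue fibers I observe that the only $z$-dependence in the image comes from the functions $z \mapsto H(x^{-1}y, x^{-1}z)^{1/2}$ contributed by $\overline{H}^{(e)}_{x,y}$. By Proposition \ref{p:rlr} these functions factor through $G/R_\mu$ and separate its points, and by the very definition of $\mathrm{R}(G,\mu)$ as the smallest compactification of $G/R_\mu$ to which they extend continuously, the $C^*$-algebra they generate inside $\prod_z \bbC$ is $C(\mathrm{R}(G,\mu))$. Combining this with the constant generators from the fiber analysis, the sup-norm closure of the image of $\O(G,\mu)$ in $\prod_z [\mathbb{K}(\ell^2(G)) \otimes C(\bT)]$ equals $C(\mathrm{R}(G,\mu)) \otimes \mathbb{K}(\ell^2(G)) \otimes C(\bT)$, which rearranges (using nuclearity of $\mathbb{K}$ and $C(\bT)$) to $C(\mathrm{R}(G,\mu) \times \bT) \otimes \mathbb{K}(\ell^2(G))$.

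The main technical hurdle is the first step: justifying rigorously that the Fock basis on $\F_{P,z}$ realizes the spatial tensor product $\ell^2(G) \otimes \ell^2(\bN)$ intertwining the generators modulo compacts. The low-level indices $(j,z) \notin E(P^m)$ produce finite-rank discrepancies that must be shown to vanish in $\Q(\F_{P,z})$, and the independence mod $\J_{\mathbb{K}}$ of the representatives chosen for $E_{x,y}$, $U_e$, and $H^{(e)}_{x,y}$ is essential here. A secondary but delicate point is the gluing step: verifying that the sup-norm closure in $\prod_z$ captures precisely $C(\mathrm{R}(G,\mu) \times \bT) \otimes \mathbb{K}(\ell^2(G))$ rather than a proper subalgebra, which rests on the minimality property defining $\mathrm{R}(G,\mu)$ and on verifying that the algebraic tensor product of matrix units, shift powers and $H$-multipliers generates a norm-dense subset.
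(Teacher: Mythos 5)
Your proposal is correct, and it reaches the theorem by a route that is recognizably parallel to the paper's but organized differently. The paper works abstractly: from Proposition \ref{P:relations-up-to-cpts} it splits off $\mathbb{K}(\ell^2(G))$ as a tensor factor directly from the matrix-unit relations, observes that the remaining corner is a commutative C*-algebra $C(\bT\times Y)$ generated by the unitary $\overline{U}_e$ and the commuting self-adjoints $\overline{H}^{(e)}_{x,y}$, and then identifies $Y$ with $\mathrm{R}(G,\mu)$ by computing norms of polynomials in the generators via Proposition \ref{p:norm-computation} and applying Stone--Weierstrass. You instead realize everything spatially: embed $\O(G,\mu)$ into $\prod_z \Q(\F_{P,z})$, identify each fiber image as $\mathbb{K}(\ell^2(G))\otimes C(\bT)$ through the Toeplitz algebra on $\ell^2(G)\otimes\ell^2(\bN)$, and glue using the $z$-dependence of the $H$-multipliers. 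Your version makes concrete why the circle factor is the \emph{full} circle (the shift has essential spectrum $\bT$), a point the paper's ``$X=\bT\times Y$'' passes over quickly; the cost is extra bookkeeping around the identifications $\F_{P,z}\cong\ell^2(G)\otimes\ell^2(\bN)$. On that point, one imprecision worth fixing: the ``finitely many missing low-level vectors'' are finitely many \emph{per row} $j$, but their union over $j\in G$ can be infinite-dimensional, so the identification is only a non-surjective isometry $V_z$; this is harmless because $\operatorname{Ad}_{V_z}$ is still an isometric $*$-homomorphism sending compacts to compacts and hence induces an embedding of Calkin algebras, but it should be said explicitly. The gluing step is sound as stated: since $H(x^{-1}y,x^{-1}z)=H(y,z)/H(x,z)$, the unital C*-algebra generated by your multipliers coincides with the one generated by the ratio-limit functions, whose Gelfand spectrum is $\mathrm{R}(G,\mu)$ by definition of the compactification, and nuclearity of $C(\mathrm{R}(G,\mu))$ identifies the sup-norm closure with the tensor product --- this plays exactly the role Stone--Weierstrass plays in the paper's argument.
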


\begin{proof}
By item $(5)$ of Proposition \ref{P:relations-up-to-cpts} we know that $\O(G,\mu)$ is generated by $\{\overline{E}_{x,y}\}_{x,y\in G}$, $\{\overline{H}^{(e)}_{x,y}\}_{x,y \in G}$ and $\overline{U}_e$. 

By items $(1)$ and $(2)$ of Proposition \ref{P:relations-up-to-cpts} the operators $\{\overline{E}_{x,y}\}_{x,y\in G}$ form a system of matrix units which commute with the self-adjoint operators $\{\overline{H}_{x,y} \}$ and $\overline{U}$. Hence, we get that $\O(G,\mu) \cong \A \otimes \mathbb{K}(\ell^2(G))$ where $\A$ is the corner C*-algebra generated by $\{\overline{H}^{(e)}_{x,y}\}_{x,y \in G}$ together with $\overline{U}_e$.

By items $(3)$ and $(4)$ of Proposition \ref{P:relations-up-to-cpts} we get that $\overline{U}_e$ is a unitary element of $\A$ which commutes with the self-adjoint elements $\overline{H}^{(e)}_{x,y}$ for every $x,y \in G$, and that $\sigma(\overline{U}_e) = \mathbb{T}$ (as an element in $\A$). Hence, we get that $\A = C(X)$ is commutative, with spectrum $X = \mathbb{T} \times Y$ so that $Y$ is the spectrum of the unital commutative C*-algebra $C(Y)$ generated by $\overline{H}^{(e)}_{x,y}$ for $x,y\in G$.

Denote by $d_{x,y}$ the function given by $d_{x,y}(z) = \sqrt{H(x^{-1}y,x^{-1} z)}$. Then, the rule $\varphi(\overline{H}^{(e)}_{x,y}) = d_{x,y}$ extends to a $*$-isomorphism $\varphi : C(Y) \rightarrow C(\mathrm{R}(G,\mu))$. Indeed, if $\overline{T}:= \sum_{i=1}^n c_i \overline{M}_i \in C(Y)$ is a finite linear combination of monomials in (self-adjoint) generators $\{\overline{H}_{x,y}^{(e)}\}$, where $M_i = \prod_{j=1}^{\ell_i} H^{(e)}_{x_{i,j},y_{i,j}}$, then its norm as an element in $\O(G,\mu)$ is given by Proposition \ref{p:norm-computation} as
$$
\|\overline{T}\| = \sup_{z\in G} \lim_m \| TQ^{[m,\infty)}|_{\F_{P,z}} \| = 
$$
$$
\sup_{z\in G} \lim_m \Big{\|} \Big[\sum_{i=1}^n c_i M_i \Big] (e^{(m)}_{e,z}) \Big{\|} = \sup_{z\in G} \big| \sum_{i=1}^n c_i \cdot \prod_{j=1}^{\ell_i}d_{x_{i,j},y_{i,j}}(z) \big|,
$$
where the second and third equalities hold because $TQ^{[m,\infty)}|_{\F_{P,z}}$ is a diagonal operator with eigenvectors $e^{(m)}_{e,z}$ for $(e,z) \in E(P^m)$ whose eigenvalues are independent of $m$. Thus, by Stone--Weierstrass theorem together with the fact that $d_{x,y}$ separate points in $\mathrm{R}(G,\mu)$, we get that $\varphi$ extends to a $*$-isomorphism.
\end{proof}

\begin{Rmk}
Since $\O(G,\mu)$ can be defined without assuming SRLP, one may ask whether some compact $G$-space appears in its computation without the assumption of SRLP, as $\mathrm{R}(G,\mu)$ does in the presence of SRLP. This seems to be the case under certain mild assumptions on the random walk, and provides a generalized notion of the ratio-limit space without assuming SRLP. We thank Guy Salomon for raising this question.
\end{Rmk}

As a consequence of our computation of $\O(G,\mu)$, we obtain the following simple corollary. Recall the definition of the ratio-limit radical $R_{\mu} \leq G$ of Proposition \ref{p:rlr}, in the presence of SRLP.

\begin{Cor}
Let $P$ be a random walk on a group $G$ induced by a finitely supported measure $\mu$, and assume $P$ has SRLP. Then the primitive ideal spectrum of $\O(G,\mu)$ is homeomorphic to $\mathbb{T}$ if and only if $R_{\mu} = G$.
\end{Cor}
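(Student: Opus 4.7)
The plan is to reduce the statement to a purely topological question via Theorem \ref{t:algebra-computation}. Since $\O(G,\mu) \cong C(\mathrm{R}(G,\mu) \times \mathbb{T}) \otimes \mathbb{K}(\ell^2(G))$, and since for any compact Hausdorff space $X$ and Hilbert space $H$ the primitive ideal spectrum of $C(X)\otimes \mathbb{K}(H)$ is canonically homeomorphic to $X$ (every irreducible representation is of the form $\mathrm{ev}_x \otimes \mathrm{id}$ for a point $x\in X$), we have a homeomorphism $\mathrm{Prim}(\O(G,\mu)) \cong \mathrm{R}(G,\mu) \times \mathbb{T}$. The task thus becomes showing that $\mathrm{R}(G,\mu) \times \mathbb{T} \cong \mathbb{T}$ if and only if $R_\mu = G$.

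The "if" direction is immediate: if $R_\mu = G$, then $G / R_\mu$ is a singleton, so its completion $\mathrm{R}(G,\mu)$ is also a singleton, and $\mathrm{R}(G,\mu) \times \mathbb{T} \cong \mathbb{T}$.

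For the converse, suppose $R_\mu \neq G$, so that $G / R_\mu$ has at least two elements. By the definition of $R_\mu$ in Proposition \ref{p:rlr}, the ratio-limit functions $\{H(x,\cdot)\}_{x\in G}$ separate points in $G/R_\mu$, and consequently the metric $d$ from the definition of $\mathrm{R}(G,\mu)$ restricts to a genuine metric on $G/R_\mu$. Hence the natural map $G/R_\mu \hookrightarrow \mathrm{R}(G,\mu)$ is injective, and $\mathrm{R}(G,\mu)$ contains at least two distinct points $\alpha_0,\alpha_1$. Then $\{\alpha_0\}\times \mathbb{T}$ and $\{\alpha_1\}\times \mathbb{T}$ are disjoint closed subspaces of $\mathrm{R}(G,\mu)\times \mathbb{T}$, each homeomorphic to $\mathbb{T}$. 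However, any proper closed subset of $\mathbb{T}$ is homeomorphic to a closed subset of $[0,1]$ (obtained by removing a point from the complementary open arc), and hence cannot be homeomorphic to $\mathbb{T}$. Thus $\mathrm{R}(G,\mu)\times \mathbb{T}$ cannot be homeomorphic to $\mathbb{T}$, completing the proof.

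The only place requiring any care is the injectivity of $G/R_\mu \hookrightarrow \mathrm{R}(G,\mu)$, which is built into the construction of the ratio-limit space and the definition of $R_\mu$; everything else is formal, relying on the structure theorem for $\O(G,\mu)$ together with the elementary topological fact that $\mathbb{T}$ contains no proper closed homeomorphic copy of itself.
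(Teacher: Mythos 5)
Your argument is correct and follows the same overall route as the paper: both reduce the statement to topology via the homeomorphism $\Prim(\O(G,\mu))\cong \mathrm{R}(G,\mu)\times\bT$ coming from Theorem \ref{t:algebra-computation}, and the forward direction is identical. The only (minor) difference is in the converse: the paper notes that $\mathrm{R}(G,\mu)$ would have to be connected as a continuous image of $\bT$ yet contains the open discrete subspace $G/R_{\mu}$, whereas you instead observe that a homeomorphism would force $\bT$ to contain a proper closed homeomorphic copy of itself (the image of $\{\alpha_0\}\times\bT$), which is impossible; both obstructions are equally valid and elementary.
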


\begin{proof}
First note that the primitive ideal space of $\O(G,\mu)$ is homeomorphic to $\mathrm{R}(G,\mu) \times \bT$ by Theorem \ref{t:algebra-computation}.

If $G=R_{\mu}$, then we get that $H(x,y)$ are constant in $y$ by definition, so that the ratio limit space $\mathrm{R}(G,\mu)$ is trivial. Hence, we get that $\O(G,\mu)$ has primitive ideal spectrum homeomorphic to $\bT$.

Conversely, if $\phi: \bT \rightarrow \mathrm{R}(G,\mu) \times \bT$ is a homeomorphism, let $\id \times \pi : \mathrm{R}(G,\mu) \times \bT \rightarrow \mathrm{R}(G,\mu)$ be the projection onto the first coordinate. Then, since $\bT$ is connected, so too would be $\mathrm{R}(G,\mu)$ as its image under $(\id \times \pi) \circ \phi$. However, $\mathrm{R}(G,\mu)$ contains the discrete subspace $G / R_{\mu}$, so that $\mathrm{R}(G,\mu)$ is connected if and only if $G / R_{\mu}$ is a single point, in which case $R_{\mu} = G$.
\end{proof}


\section{Symmetry-uniqueness and subproduct systems.} \label{s:symmetry-uniqueness}

In this section we show that when the $G$ action on the ratio limit boundary is minimal, there is a unique smallest quotient of $\T(G,\mu)$ that respects natural $G\times \mathbb{T}$ symmetries coming from the random walk. After this is done, we explain how our C*-algebras arise from subproduct systems, and how this sheds light on Viselter's question in that context.

Let $P$ be a random walk on $G$ induced by a finitely supported measure $\mu$. The standard gauge action by the unit circle is the point-norm continuous action $\gamma : \mathbb{T} \rightarrow \Aut(\T(G,\mu))$ given by $\gamma_{\zeta}(T) = U_{\zeta} T U_{\zeta}^{-1}$ where $U_{\zeta} : \F_P \rightarrow \F_P$ is the unitary defined by $U_{\zeta}(e_{y,z}^{(m)}) = \zeta^m e_{y,z}^{(m)}$ for every $(y,z) \in E(P^m)$.

When $\mathrm{R}(G,\mu)$ is trivial, it readily follows that $\O(G,\mu) \cong C(\mathbb{T}) \otimes \mathbb{K}(\ell^2(G))$ is the unique smallest $\mathbb{T}$-equivariant quotient of $\T(G,\mu)$. On the other hand, when $\mathrm{R}(G,\mu)$ is non-trivial, the action of $\mathbb{T}$ on $\O(G,\mu) \cong C(\mathrm{R}(G,\mu) \times \mathbb{T}) \otimes \mathbb{K}(\ell^2(G))$ has at least two maximal $\mathbb{T}$ invariant proper ideals, so there is no unique smallest $\mathbb{T}$-equivariant quotient. 

Thus, in order to get unique smallest symmetry-equivariant quotients, we add additional symmetries to $\T(G,\mu)$ coming from $G$. For each $g\in G$ we define a unitary operator $V_g : \F_P \rightarrow \F_P$ given by $V_g(e^{(m)}_{x,y}) = e^{(m)}_{gx,gy}$. A computation then shows that for any $(x,y)\in E(P^n)$ we have $V_gS^{(n)}_{x,y} = S^{(n)}_{gx,gy} V_g$, so we then get an induced action $\delta: G \rightarrow \Aut(\T(G,\mu))$ given by $\delta_g(T) = V_g TV_g^{-1}$.

It is clear that $U_{\zeta} V_g = V_g U_{\zeta}$ for every $\zeta \in \mathbb{T}$ and $g\in G$, and we denote this unitary operator by $W_{g,\zeta}$. Hence, the actions $\gamma$ and $\delta$ commute and induce a point-norm continuous action $\lambda : G\times \mathbb{T} \rightarrow \Aut(\T(G,\mu))$ given by 
$$
\lambda_{(g,\zeta)}(T) = W_{g,\zeta}T W_{g,\zeta}^{-1}.
$$

Our goal in this section is to show that when the action of $G$ on $\partial_{\mathrm{R}} G$ is minimal, there is a unique largest $\lambda$-invariant proper ideal $\J_{\lambda}$ in $\T(G,\mu)$.

It is then clear that the quotient map $q_{\lambda}$ of $\T(G,\mu)$ by this ideal is automatically injective on $c_0(G) \subseteq \T(G,\mu)$, and hence this will establish a $G\times \mathbb{T}$-invariance uniqueness theorem for $\T(G,\mu) / \J_{\lambda}$.

Recall that $\J_{\mathbb{K}} := \prod_{z\in G}\mathbb{K}(\F_{P,z})$ is an ideal of $\widehat{\T}(G,\mu) : = \T(G,\mu) + \J_{\mathbb{K}}$ giving rise to the quotient $\O(G,\mu)$. It is easily shown that $V_g\J_{\mathbb{K}} V_g^{-1}= U_{\zeta}\J_{\mathbb{K}}U_{\zeta}^{-1} = \J_{\mathbb{K}}$ for $g\in G$ and $\zeta \in \mathbb{T}$, so that $\lambda$ extends to a point-norm continuous action (denoted still by) $\lambda : G \times \mathbb{T} \rightarrow \Aut(\widehat{\T}(G,\mu))$, making $\J_{\mathbb{K}}$ into a $\lambda$-invariant ideal of $\widehat{\T}(G,\mu)$. Hence, we obtain an induced action $\overline{\lambda} : G \times \mathbb{T} \rightarrow \Aut(\O(G,\mu))$ on the quotient. 

Since for $x,y\in G$ we have $\lambda_{g,\zeta}(W^{(n)}_{x,y}) = \zeta^n \cdot W^{(n)}_{gx,gy}$ and $\lambda_{g,\zeta}(V^{(n)}_{x,y}) = \zeta^n \cdot V^{(n)}_{gx,gy}$, it follows that $\lambda$ acts on generators of $\O(G,\mu)$ by,
$$
\overline{\lambda}_{g,\zeta}(\overline{E}_{x,y}) = \overline{E}_{gx,gy}, \ \ \overline{\lambda}_{g,\zeta}(\overline{H}^{(h)}_{x,y}) = \overline{H}^{(gh)}_{gx,gy}, \ \ \text{and} \ \ \overline{\lambda}_{g,\zeta}(\overline{U}_x) = \zeta \cdot \overline{U}_{gx},
$$
for $g\in G$ and $\zeta \in \mathbb{T}$. Let $\{e_g\}$ be a standard orthonormal basis for $\ell^2(G)$, and let $S_g \in B(\ell^2(G))$ be the unitary shift operator given by $S_g(e_h) = e_{gh}$. 

Recall now from Section \ref{s:ratio-limit space} that the compacta $\mathrm{R}(G,\mu)$ and $\partial_{\mathrm{R}} G$ carry a $G$ action induced from left multiplication on $G$, which gives rise to an action $\widehat{\beta}$ of $G$ on $C(\mathrm{R}(G,\mu))$ and $C(\partial_{\mathrm{R}} G)$ given by $\widehat{\beta}_g(f)(\alpha) = f(g^{-1} \alpha)$. Under the identification of $H_{x,y}^{(e)}$ with $d_{x,y}$ and of $\overline{E}_{x,y}$ as matrix units acting on $\ell^2(G)$ in Theorem \ref{t:algebra-computation}, it is readily verified that
$$
\overline{\lambda}_{g,\zeta}(f\otimes K)(\alpha,\xi) = f(g^{-1}\alpha,\zeta \xi) \otimes S_gK S_g^{-1}.
$$
Finally, recall the notation for the natural quotient map $q_P : \T(G,\mu) \rightarrow \O(G,\mu)$ by the ideal $\J(G,\mu):= \J_{\mathbb{K}} \cap \T(G,\mu)$. By the above, we see that $q_P$ is naturally a $G \times \bT$-equivariant map with the appropriate $G\times \mathbb{T}$ actions. Hence, the ideal
$$
\J_{\lambda} := q_P^{-1}\big[(C([G / R_{\mu}] \times \mathbb{T}) \otimes \mathbb{K}(\ell^2(G)) \big],
$$
is clearly $\lambda$-invariant in $\T(G,\mu)$, and is proper if and only if $\partial_{\mathrm{R}}G \neq \emptyset$. 

\begin{Thm} \label{T:symmetry-uniqueness}
Suppose $P$ is a random walk on an infinite group $G$ induced by a finitely supported measure $\mu$, and assume that $P$ has SRLP. Suppose that $\partial_{\mathrm{R}} G \neq \emptyset$ and that the action of $G$ on $\partial_{\mathrm{R}} G$ is minimal. Then $\J_{\lambda}$ is the largest $\lambda$-invariant proper ideal of $\T(G,\mu)$.
\end{Thm}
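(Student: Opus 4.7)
My plan is to first verify that $\J_{\lambda}$ is a proper $\lambda$-invariant ideal, and then to show via a case analysis on $q_P(\J)$ that any proper $\lambda$-invariant ideal $\J$ is contained in $\J_{\lambda}$. Since $\partial_{\mathrm{R}} G \neq \emptyset$, the quotient $\O(G,\mu) / [C_0(G/R_{\mu} \times \mathbb{T}) \otimes \mathbb{K}(\ell^2(G))] \cong C(\partial_{\mathrm{R}} G \times \mathbb{T}) \otimes \mathbb{K}(\ell^2(G))$ is nonzero, so $C_0(G/R_{\mu} \times \mathbb{T}) \otimes \mathbb{K}$ is a proper ideal of $\O(G,\mu)$, whence $\J_{\lambda}$ is proper in $\T(G,\mu)$; $\lambda$-invariance is clear since $\J_{\lambda}$ is the preimage of a $\overline{\lambda}$-invariant ideal under the equivariant map $q_P$.

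For the containment, let $\J$ be a proper $\lambda$-invariant ideal and consider $q_P(\J) \lhd \O(G,\mu)$, which is $\overline{\lambda}$-invariant. Using the isomorphism $\O(G,\mu) \cong C(\mathrm{R}(G,\mu) \times \mathbb{T}) \otimes \mathbb{K}(\ell^2(G))$ from Theorem \ref{t:algebra-computation}, and the simplicity of $\mathbb{K}(\ell^2(G))$, every $\overline{\lambda}$-invariant ideal corresponds to a $G \times \mathbb{T}$-invariant open subset of $\mathrm{R}(G,\mu) \times \mathbb{T}$. Transitivity of $\mathbb{T}$ on itself forces any such subset to have the form $U \times \mathbb{T}$ with $U \subseteq \mathrm{R}(G,\mu)$ open and $G$-invariant; transitivity of $G$ on $G/R_{\mu}$ together with minimality of $G$ on $\partial_{\mathrm{R}} G$ then forces $U \in \{\emptyset, G/R_{\mu}, \mathrm{R}(G,\mu)\}$. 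In the first two cases, $\J \subseteq q_P^{-1}(q_P(\J)) \subseteq \J_{\lambda}$ directly from the definition of $\J_{\lambda}$, so it remains to rule out $q_P(\J) = \O(G,\mu)$, equivalently $\J + \J(G,\mu) = \T(G,\mu)$.

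To rule out this case, I first show that every nonzero $\lambda$-invariant ideal $\J$ of $\T(G,\mu)$ contains $\I_{\mathbb{K}} = \bigoplus_{z \in G} \mathbb{K}(\F_{P,z})$. The key observations are: (i) $\I_{\mathbb{K}}$ is essential in $\T(G,\mu)$, since if $T \cdot \I_{\mathbb{K}} = 0$ then $T$ annihilates every rank-one projection $Q^{(\ell)}_{x,z} \in \I_{\mathbb{K}}$, hence kills every basis vector $e^{(\ell)}_{x,z}$ and is zero; and (ii) $\I_{\mathbb{K}}$ is $G$-simple because the $G$-action permutes the simple summands $\mathbb{K}(\F_{P,z})$ transitively. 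Thus $\J \cap \I_{\mathbb{K}}$ is a $G$-invariant ideal of $\I_{\mathbb{K}}$, so is either $\{0\}$ (in which case $\J \cdot \I_{\mathbb{K}} \subseteq \J \cap \I_{\mathbb{K}} = \{0\}$ and $\J = 0$ by essentiality) or $\I_{\mathbb{K}}$, yielding $\I_{\mathbb{K}} \subseteq \J$.

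The main obstacle, and the step I expect to be most delicate, is bridging from $\I_{\mathbb{K}} \subseteq \J$ to $\J(G,\mu) \subseteq \J$ under the hypothesis $\J + \J(G,\mu) = \T(G,\mu)$, which would yield $\J = \T(G,\mu)$ and contradict properness. My plan is to iterate the essentiality and $G$-simplicity scheme in the quotient $\T(G,\mu)/\I_{\mathbb{K}}$, showing that $\J(G,\mu)/\I_{\mathbb{K}}$ is essential (i.e.\ $T \cdot \J(G,\mu) \subseteq \I_{\mathbb{K}}$ forces $T \in \I_{\mathbb{K}}$) and $G$-simple, from which the same dichotomy applied to $\bar{\J} = \J/\I_{\mathbb{K}}$ gives either $\bar{\J} = \{0\}$ (impossible, since then $\J = \I_{\mathbb{K}}$ but $\J + \J(G,\mu) = \J(G,\mu) \neq \T(G,\mu)$) or $\J(G,\mu) \subseteq \J$. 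An alternative, potentially cleaner, route is to establish $\J(G,\mu) = \I_{\mathbb{K}}$ outright, by using local finiteness of $\Gr(P)$ together with the explicit form of generators $S^{(n)}_{x,y}$ to approximate elements of $\T(G,\mu) \cap \prod_z \mathbb{K}(\F_{P,z})$ in norm by finitely supported ones; in this case the bridge is automatic.
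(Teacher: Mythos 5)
Your overall architecture is the same as the paper's: pass to $q_P(\J)\lhd\O(G,\mu)\cong C(\mathrm{R}(G,\mu)\times\mathbb{T})\otimes\mathbb{K}(\ell^2(G))$, identify $\overline{\lambda}$-invariant ideals with open invariant subsets $U\times\mathbb{T}$, and use minimality of $G\curvearrowright\partial_{\mathrm{R}}G$ together with openness and density of $G/R_{\mu}$ to force $U\in\{\emptyset,\,G/R_{\mu},\,\mathrm{R}(G,\mu)\}$; the first two cases give $\J\subseteq\J_{\lambda}$ exactly as in the paper. Your auxiliary lemma that every nonzero $G$-invariant ideal of $\T(G,\mu)$ contains $\I_{\mathbb{K}}$ (essentiality of $\I_{\mathbb{K}}$ plus transitivity of the $G$-action on the summands $\mathbb{K}(\F_{P,z})$) is correct and is a genuine addition not present in the paper.

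The gap is the step you yourself flag: ruling out $q_P(\J)=\O(G,\mu)$, i.e.\ $\J+\J(G,\mu)=\T(G,\mu)$. Neither of your two routes closes it. Route (b) proposes $\J(G,\mu)=\I_{\mathbb{K}}$, but the Remark following Proposition \ref{p:indep-of-m} argues that the inclusion $\I_{\mathbb{K}}\subseteq\J(G,\mu)$ is proper in most cases (it is proved proper when $R_{\mu}=G$, and the non-uniformity in $z$ of the convergence defining $H$ is exactly what separates $\I_{\mathbb{K}}$ from $\J_{\mathbb{K}}$), so there is no reason to expect this identity under the hypotheses of the theorem. Route (a) rests on two unestablished structural claims about $\J(G,\mu)/\I_{\mathbb{K}}$ inside $\T(G,\mu)/\I_{\mathbb{K}}$: essentiality fails outright whenever $\J(G,\mu)=\I_{\mathbb{K}}$, and even when the quotient ideal is nonzero you give no reason why it should be essential or why it should have no proper $G$-invariant ideals --- unlike $\I_{\mathbb{K}}$, it is not a direct sum of simple blocks permuted transitively by $G$, and its structure (a subquotient of $\prod_z\mathbb{K}(\F_{P,z})/\bigoplus_z\mathbb{K}(\F_{P,z})$) is opaque. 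For what it is worth, the paper's own proof dispatches this point with the bare assertion that $q_P(\J)$ is proper in $\O(G,\mu)$ for any proper $\lambda$-invariant $\J$, so you have correctly located where the substantive content lies; but as written your proposal does not supply the missing argument, and the two candidate completions you offer are, respectively, likely false and unsupported.
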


\begin{proof}
Let $\J$ be a $\lambda$-invariant proper ideal. We denote by $\overline{\J}$ the image of $\J$ under the quotient map $q_P$. Then, there are two cases.

Suppose first that $\J(G,\mu) \subseteq \J$. Then, we get that $\overline{\J}$ is a proper ideal in $C(\mathrm{R}(G,\mu) \times \mathbb{T}) \otimes \mathbb{K}(\ell^2(G))$. Hence, there exists an open $\lambda$-invariant set $Y \subseteq \mathrm{R}(G,\mu) \times \mathbb{T}$ such that $\overline{\J} = C(Y) \otimes \mathbb{K}(\ell^2(G))$. Then, we must have that $Y \subseteq [G / R_{\mu}] \times \mathbb{T}$. Indeed, if not, then there exists $(\xi,\zeta) \in Y \cap [\partial_{\mathrm{R}} G \times \mathbb{T}]$. Now, since $G$ acting on $\partial_{\mathrm{R}} G$ is minimal, we get that the $G \times \mathbb{T}$ action on $\partial_{\mathrm{R}} G \times \mathbb{T}$ is also minimal. Hence, we get that $Y \supseteq \partial_{\mathrm{R}} G \times \mathbb{T}$. But now, since $Y$ is open, it must contain some element $(hR_{\mu}, \zeta) \in [G / R_{\mu}] \times \mathbb{T}$, and as $\lambda_{g,1}$ acts as left multiplication on $G / R_{\mu}$ together with $\lambda$-invariance of $Y$ we get that $Y = \mathrm{R}(G,\mu)$. Hence, we obtain that $\overline{\J} = C(\mathrm{R}(G,\mu) \times \mathbb{T}) \otimes \mathbb{K}(\ell^2(G))$ in contradiction to $\overline{\J}$ being a proper ideal. Thus, we have shown that $Y \subseteq [G / R_{\mu}] \times \mathbb{T}$, and we obtain that $\J \subseteq \J_{\lambda}$.

Now suppose that $\J$ is a general $\lambda$-invariant proper ideal. Then, since $\overline{\J}$ is proper in $\O(G,\mu)$, we get that $\J + \J(G,\mu)$ is also proper in $\T(G,\mu)$. Hence, by the previous argument we see that $\J \subseteq \J + \J(G,\mu) \subseteq \J_{\lambda}$.
\end{proof}

Recall that a discrete group $G$ is said to be \emph{hyperbolic} if all geodesic triangles in its Cayley graph are $\delta$-thin for some $\delta>0$. This turns out to be independent of the finite set of generators for $G$. The \emph{Gromov boundary} $\partial G$ of $G$ is a compact metrizable $G$-space comprised of equivalence classes of geodesic rays under the equivalence relation of uniform bounded time-distance. A combination of \cite[Proposition 1.13 \& Proposition 3.3]{Bow99} (see also \cite[Remark 5.6]{KK17}) shows that the action of $G$ on $\partial G$ is minimal. For more on the theory of hyperbolic graphs and their boundaries in the context of random walks, we refer to \cite[Section 22 \& Section 27]{Woe00}.

In work of Gou\"{e}zel and Lalley \cite{GL13} and Gou\"{e}zel \cite{Gou14}, it is shown, via a local-limit theorem, that every symmetric aperiodic random walk $P$ on a non-elementary hyperbolic group $G$ satisfies SRLP. From \cite[Corollary 6.6(b)]{Woe+} we get that $R_{\mu}$ is finite, so combined with \cite[Theorem 4.5]{Woe+} we get that the quotient map $G \rightarrow G / R_{\mu}$ induces a homeomorphism $\tau : \partial G \rightarrow \partial_{\mathrm{R}} G$ which is automatically $G$-equivariant. Thus, we obtain the following corollary, showing the existence of a unique smallest $G \times \mathbb{T}$-equivariant quotient for Toeplitz algebras of symmetric random walks on hyperbolic groups.

\begin{Cor} \label{c:sut}
Let $P$ be a symmetric aperiodic random walk on a non-elementary hyperbolic group $G$ induced by a finitely supported $\mu$. Then $C(\partial G \times \mathbb{T}) \otimes \mathbb{K}(\ell^2(G))$ is the unique smallest $G\times \mathbb{T}$ equivariant quotient of $\T(G,\mu)$.
\end{Cor}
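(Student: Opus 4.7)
The plan is to derive this corollary as a direct assembly of Theorem~\ref{T:symmetry-uniqueness} with the imported inputs from hyperbolic random walk theory, plus an identification of the resulting quotient. Since most of the heavy lifting is done by Theorem~\ref{T:symmetry-uniqueness} and the companion paper \cite{Woe+}, the proof should be short; the only real work is verifying that every hypothesis of Theorem~\ref{T:symmetry-uniqueness} is satisfied in the hyperbolic setting.

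First I would verify SRLP. By the local limit theorems of Gou\"ezel--Lalley \cite{GL13} in the co-compact Fuchsian case and Gou\"ezel \cite{Gou14} in full generality, a symmetric aperiodic finite-range random walk on a non-elementary hyperbolic group $G$ satisfies an asymptotic of the form $P^{(n)}_{x,y} \sim C\cdot \beta(x,y)\cdot\rho^n\cdot n^{-3/2}$, which as noted in the introduction immediately implies SRLP with $H(x,y) = \beta(x,y)/\beta(e,y)$. Next I would invoke the companion paper: \cite[Corollary 6.6(b)]{Woe+} gives $|R_\mu|<\infty$, and \cite[Theorem 4.5]{Woe+} identifies the (full) ratio-limit boundary with the Gromov boundary via a homeomorphism descending to a $G$-equivariant homeomorphism $\tau:\partial G\to \partial_{\mathrm{R}} G$. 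In particular $\partial_{\mathrm{R}} G\neq \emptyset$ because $G$ is non-elementary.

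Next I would argue minimality. By \cite[Proposition 1.13 \& Proposition 3.3]{Bow99} the $G$-action on the Gromov boundary of a non-elementary hyperbolic group is minimal; transporting this along the $G$-equivariant homeomorphism $\tau$ shows that $G$ acts minimally on $\partial_{\mathrm{R}} G$. At this point every hypothesis of Theorem~\ref{T:symmetry-uniqueness} is in force, so $\J_\lambda$ is the largest $\lambda$-invariant proper ideal of $\T(G,\mu)$, and hence $\T(G,\mu)/\J_\lambda$ is the unique smallest $G\times\bT$-equivariant quotient.

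Finally I would compute this quotient explicitly. By definition $\J_\lambda = q_P^{-1}\big[C([G/R_\mu]\times \bT)\otimes \bbK(\ell^2(G))\big]$, so using Theorem~\ref{t:algebra-computation} together with the short exact sequence coming from the closed invariant subset $\partial_{\mathrm{R}} G\subseteq \mathrm{R}(G,\mu)$, we get
\[
\T(G,\mu)/\J_\lambda \;\cong\; \O(G,\mu)\big/\big[C([G/R_\mu]\times\bT)\otimes\bbK(\ell^2(G))\big]\;\cong\; C(\partial_{\mathrm{R}} G \times \bT)\otimes \bbK(\ell^2(G)).
\]
Transporting along $\tau$ identifies this with $C(\partial G \times \bT)\otimes \bbK(\ell^2(G))$, and the equivariance verification of the action of $\overline{\lambda}$ on the quotient was already carried out in the paragraph preceding Theorem~\ref{T:symmetry-uniqueness}. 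The only subtle point that could cause trouble is that one needs the homeomorphism $\tau$ delivered by \cite{Woe+} to be genuinely $G$-equivariant (not merely continuous); this is the approximation argument sketched at the end of Section~\ref{s:ratio-limit space}, and is the one ingredient I would want to double-check carefully.
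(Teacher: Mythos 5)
Your proposal is correct and follows essentially the same route as the paper, which derives the corollary directly from Theorem \ref{T:symmetry-uniqueness} after citing Gou\"ezel--Lalley for SRLP, \cite[Corollary 6.6(b) \& Theorem 4.5]{Woe+} for the $G$-equivariant homeomorphism $\tau:\partial G\to\partial_{\mathrm{R}}G$, and Bowditch for minimality of the Gromov boundary action. The concluding identification $\T(G,\mu)/\J_{\lambda}\cong C(\partial_{\mathrm{R}}G\times\bT)\otimes\bbK(\ell^2(G))$ via the restriction exact sequence is exactly the intended (implicit) final step.
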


Hence, in many examples $\O(G,\mu) \cong C(\mathrm{R}(G,\mu) \times \mathbb{T}) \otimes \mathbb{K}(\ell^2(G))$ fails to be the unique smallest $G \times \mathbb{T}$ equivariant quotient, even when one such exists.

Our final goal is to show that the Toeplitz algebra $\T(G,\mu)$ arises as a Toeplitz algebra of a subproduct system associated to the random walk. We define subproduct systems in the restricted context where the coefficient C*-algebra is $c_0(\mathcal{X})$ for a countable set $\mathcal{X}$ (see \cite[Definition 1.4]{Vis11} for the general definition). We will say that a space $X$ is a correspondence (over $c_0(\mathcal{X})$) if it is a $c_0(\mathcal{X})$-bimodule together with a right-compatible $c_0(\mathcal{X})$-valued inner product so that the left action of $c_0(\mathcal{X})$ is a $*$-homomorphism into bounded operators respecting the right-module structure on $X$. For more on subproduct systems over C*-algebras we recommend \cite{Vis11, Vis12}, and for the theory of C*-correspondences we recommend \cite{Lan95}.

\begin{Def} \label{d:subproductsystem}
Let $\mathcal{X}$ be a countable set. A \emph{subproduct system} is a family $X=\{X_n\}$ of correspondences (over $c_0(\mathcal{X})$) such that
\begin{enumerate}
\item $X_0 = c_0(\mathcal{X})$

\item For all $n,m \in \bN$ there are coisometric bimodule maps 
$$
U_{n,m} : X_n \otimes X_m \rightarrow X_{n+m}
$$
such that $U_{0,n}$ and $U_{n,0}$ are the left and right actions of the bimodule $X_n$, and for all $n,m,\ell \in \bN$ we have the associativity condition
$$
U_{n+m,\ell}(U_{n,m} \otimes I_{X_{\ell}}) = U_{n,m+\ell}(I_{X_n} \otimes U_{m,\ell}).
$$
\end{enumerate}
\end{Def}

Given a subproduct system $X = \{X_n\}$ as above, we may form its Fock space $\F_X := \oplus_{m=0}^{\infty}X_m$ C*-correspondence, as well as the bounded bimodule operators $S^{(n)}_{\xi}$ aon $\F_X$ for $\xi \in X_n$, so that $S^{(n)}_{\xi} : X_m \rightarrow X_{n+m}$ is given by setting $S^{(n)}_{\xi}(\eta) = U_{n,m}(\xi \otimes \eta)$. Denote by $\L(\F_X)$ all bounded right module maps on $\F_X$. The Toeplitz algebra of $X$ is then the C*-subalgebra of $\L(\F_X)$ given by
$$
\T(X) := C^*( \ S^{(n)}_{\xi} \ | \ \xi \in X_n, \ n\in \bN \ ).
$$

Now let $P$ be a random walk on a group $G$ induced by a finitely supported measure $\mu$. We define the correspondences
$$
\Arv_0(P^n) = \{ \ [a_{x,y}] \in c_0(G\times G) \ | \ a_{x,y} = 0 \ \ \text{if} \ (x,y)\notin E(P^n) \  \}.
$$
together with the $c_0(G)$-valued inner product $\langle A, B\rangle = \Diag(A^*B)$, and left and right bimodule actions of $c_0(G)$ given by left and right diagonal matrix multiplication. Note also that each $\Arv_0(P^n)$ is the closed linear span of matrix units $e_{x,y}$ for $(x,y) \in E(P^n)$.

The operation $U_{n,m} : \Arv_0(P^n) \otimes \Arv_0(P^m) \rightarrow \Arv_0(P^{n+m})$ is given on matrix units $e_{x,y} \in \Arv_0(P^n)$ and $e_{y',z} \in \Arv_0(P^m)$ by the rule
$$
U_{n,m}(e_{x,y} \otimes e_{y',z}) = \delta_{y,y'}\sqrt{\frac{P^{(n)}_{x,y}P^{(m)}_{y,z}}{P^{(n+m)}_{x,z}}}e_{x,z}.
$$
It follows from local finiteness of $\Gr(P)$ together with \cite[Theorem 3.4]{DOM14} that $\Arv^P_0:=\{\Arv_0(P^n)\}$ together with $\{U_{n,m}\}$ is a subproduct system. More precisely, $\Arv_0(P^n)$ are correspondences with the above inner product and bimodule actions, and the above rule for $U_{n,m}$ yields a well-defined coisometric bimodule map satisfying the conditions in Definition \ref{d:subproductsystem}.

Now, since $c_0(G)$ is represented as diagonal matrix multiplication on $\ell^2(G)$, by \cite[Corollary 2.74]{RW98} we get a faithful $*$-representation on Hilbert space $\rho: \L(\F_{\Arv^P_0}) \rightarrow B(\F_{\Arv^P_0} \otimes \ell^2(G))$ given by $\rho(T)(\xi\otimes h) = T\xi \otimes h$. We may then identify the the space $\F_{\Arv^P_0} \otimes \ell^2(G)$ with $\F_P$ via the unitary identification $e_{x,z} \otimes e_z \mapsto e^{(m)}_{x,z}$ for $e_{x,z} \in \Arv_0(P^m)$. Under this identification, $\Arv_0(P^m)$ is identified with $\F_P^{(m)}$, and $\F_{\Arv^P_0} \otimes \mathbb{C} e_z$ is identified with $\F_{P,z}$. Hence, we get that $\F_{\Arv^P_0} \otimes \ell^2(G) \cong \F_P$, so that the representation $\rho$ maps $S^{(n)}_{e_{x,y}}$ (which initially acts on $\F_{\Arv^P_0}$) to $S^{(n)}_{x,y}$ acting on $\F_P$. thus, $\rho$ restricts to a $*$-isomorphism from $\T(\Arv^P_0)$ onto $\T(G,\mu)$ (see also \cite[Notation 3.2]{DOM14} and the preceding discussion). The following then coincides with of Viselter's ideal in \cite[Theorem 2.5]{Vis12} by virtue of \cite[Corollary 2.7]{Vis12}.

\begin{Def} \label{d:viselter-cuntz-algebra}
Let $X = \{X_n\}$ be a subproduct system. Viselter's ideal $\I \lhd \T(X)$ is given 
$$
\I_X := \{ \ T \in \T(X) \ | \ \lim_m \|T Q_{[m,\infty)} \| = 0 \},
$$
Where $Q_{[m,\infty)} = \sum_{\ell=m}^{\infty}Q_m$, and $Q_m$ is the natural orthogonal projection from $\F_X$ onto $\Arv_0(P^m)$. Viselter's Cuntz-Pimsner algebra is defined as 
$$
\O(X) = \T(X) / \I_X.
$$
\end{Def}

\begin{Prop} \label{p:viselter-subprod-ideal}
Let $P$ be a random walk on a group $G$ induced by a finitely supported measure $\mu$. Then $\I_{\Arv_0^P} \cong \oplus_{z\in G}\mathbb{K}(\F_{P,k})$.
\end{Prop}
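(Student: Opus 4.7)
The plan is to transfer via the $*$-isomorphism $\rho : \T(\Arv_0^P) \to \T(G,\mu)$ described above the statement. Since $\rho$ is isometric and carries $Q_{[m,\infty)}$ to $Q^{[m,\infty)}$, Viselter's ideal corresponds to
$$
\I := \{\, T \in \T(G,\mu) \,:\, \lim_{m\to\infty} \|T Q^{[m,\infty)}\| = 0 \,\},
$$
and the task reduces to showing $\I = \I_{\mathbb{K}}$, where $\I_{\mathbb{K}} = \bigoplus_{z\in G} \mathbb{K}(\F_{P,z})$ by Proposition \ref{p:viselter-ideal}.

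For the inclusion $\I_{\mathbb{K}} \subseteq \I$, I would use that $\I$ is a closed two-sided ideal of $\T(G,\mu)$ (cf.\ \cite[Theorem 2.5]{Vis12}), while $\I_{\mathbb{K}}$ is the closed ideal of $\T(G,\mu)$ generated by $\{Q^{(0)}_z\}_{z\in G}$ by Proposition \ref{p:viselter-ideal}. Since $Q^{(0)}_z Q^{[m,\infty)} = 0$ for every $m \geq 1$, each generator $Q^{(0)}_z$ lies in $\I$, giving the inclusion. For the reverse inclusion, given $T \in \I$, I would approximate $T$ in norm by $T Q^{[0,m)}$, where $Q^{[0,m)} := \sum_{\ell=0}^{m-1} Q^{(\ell)} = I - Q^{[m,\infty)}$. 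Since $\|T - T Q^{[0,m)}\| = \|T Q^{[m,\infty)}\| \to 0$ and $\I_{\mathbb{K}}$ is norm-closed, it suffices to prove that $T Q^{[0,m)} \in \I_{\mathbb{K}}$ for every $T \in \T(G,\mu)$ and $m \in \bN$. I would first verify this for a monomial $M$ in the generators $S^{(n)}_{x,y}$ and their adjoints, and then pass to polynomials by linearity and to arbitrary $T \in \T(G,\mu)$ by norm-closedness of $\I_{\mathbb{K}}$.

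The monomial computation is the core of the argument. Every $S^{(n)}_{x,y}$ and its adjoint preserve each reducing subspace $\F_{P,z}$, so $MQ^{[0,m)}$ decomposes as $\bigoplus_{z\in G} [MQ^{[0,m)}]|_{\F_{P,z}}$. The range of $Q^{[0,m)}|_{\F_{P,z}}$ has basis $\{e^{(\ell)}_{y,z} : \ell < m,\ y^{-1}z \in \supp(\mu^{*\ell})\}$, which is finite because $\mu$ is finitely supported; hence each restriction $[MQ^{[0,m)}]|_{\F_{P,z}}$ is finite-rank. Moreover, the rightmost factor of $M$ (whether of the form $S^{(n_r)}_{i_r,j_r}$ or $S^{(n_r)*}_{i_r,j_r}$) forces the source index $y$ of an input $e^{(\ell)}_{y,z}$ to equal a single prescribed element $y_0 \in G$, so $[MQ^{[0,m)}]|_{\F_{P,z}}$ vanishes unless $z \in y_0 \cdot \bigcup_{\ell<m} \supp(\mu^{*\ell})$, which is a finite set. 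Thus $MQ^{[0,m)}$ is a finite orthogonal sum of finite-rank operators, and so lies in $\I_{\mathbb{K}}$. The main obstacle I anticipate is precisely this last point: ensuring that $TQ^{[0,m)}$ belongs to the norm-closed (i.e.\ $c_0$-type) direct sum $\bigoplus_z \mathbb{K}(\F_{P,z})$ rather than in the larger $\ell^\infty$-product $\prod_z \mathbb{K}(\F_{P,z})$, which already contains $Q^{[0,m)}$ itself. The crucial ingredient ruling this out is the finite support of $\mu$, which confines the nonzero $z$-slices of $MQ^{[0,m)}$ to a finite subset of $G$.
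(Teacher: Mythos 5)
Your argument is correct and follows the same overall skeleton as the paper's proof: transfer the problem to $\T(G,\mu)$ via $\rho$, and for the hard inclusion approximate $T\in\rho(\I_{\Arv_0^P})$ in norm by its compression to levels $<m$. The two proofs diverge only in how they certify that this compression lands in the $c_0$-direct sum $\oplus_{z\in G}\mathbb{K}(\F_{P,z})$ rather than merely in $\prod_{z\in G}\mathbb{K}(\F_{P,z})$ --- the obstacle you correctly single out. You resolve it by a monomial analysis: the rightmost factor of a monomial pins down the source index of any non-annihilated basis vector, so finite support of $\mu$ confines the nonzero $z$-slices of $MQ^{[0,m)}$ to a finite set; you then pass to general $T$ by linearity and density. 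The paper instead inserts the approximate identity $p_F=\sum_{x\in F}p_x$ (for finite $F\subseteq G$): it suffices to show $Tp_F\in\oplus_{z\in G}\mathbb{K}(\F_{P,z})$, and
$Tp_F\cdot\sum_{\ell=0}^{m-1}Q^{(\ell)}=Tp_F\cdot\sum_{\ell=0}^{m-1}\sum_{x\in F}Q^{(\ell)}_x$
lies in the direct sum because each $Q^{(\ell)}_x$ already does by Proposition \ref{p:viselter-ideal}. The approximate-identity device buys a shorter proof with no case analysis on generators, while your version makes the role of finite support and local finiteness explicit at the level of individual monomials; both are sound. Your derivation of the easy inclusion from the two-sided ideal property of Viselter's ideal together with $Q^{(0)}_zQ^{[m,\infty)}=0$ is likewise a clean alternative to the paper's unelaborated ``clearly''.
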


\begin{proof}
Let $\rho : \T(\Arv_0^P) \rightarrow \T(G,\mu)$ be the isomorphism in the discussion preceding Definition \ref{d:viselter-cuntz-algebra}. Then we get that
$$
\rho(\I_X) = \{ \ T \in \T(G,\mu) \ | \ \lim_m \ \| T Q^{[m,\infty)} \| = 0 \ \},
$$
where now $Q^{[m,\infty)}$ is the projection from $\F_P$ onto $\oplus_{\ell=m}^{\infty} \F_P^{(\ell)}$ appearing after Definition \ref{D:RW-cuntz-algebra}. Clearly $\oplus_{z\in G} \mathbb{K}(\F_{P,k}) \subseteq \rho(\I_X)$, and from Proposition \ref{p:viselter-ideal} we get that $Q^{(\ell)}_z \in \oplus_{z\in G} \mathbb{K}(\F_{P,k})$ for every $\ell \in \bN$ and $z\in G$. 

For the converse inclusion, let $T \in \rho(\I_X)$. For a finite set $F\subseteq G$ we let $p_F = \sum_{x\in F} p_x$, and note that $\{p_F\}$ is an approximate identity for $\T(G,\mu)$. Hence, it suffices to show that $Tp_F \in \oplus_{z\in G} \mathbb{K}(\F_{P,k})$ for every finite subset $F\subset G$. But then,
$$
\|Tp_F - Tp_F \cdot [\sum_{\ell=0}^{m-1}Q^{(\ell)}] \| = \|Tp_F Q^{[m,\infty)} \| \rightarrow 0,
$$
and since $Tp_F \cdot [\sum_{\ell=0}^{m-1}Q^{(\ell)}] = Tp_F \cdot \sum_{\ell=0}^{m-1}\sum_{x\in F} Q^{(\ell)}_x \in \oplus_{z\in G} \mathbb{K}(\F_{P,k})$, we get that $Tp_F \in \oplus_{z\in G} \mathbb{K}(\F_{P,k})$.
\end{proof}

In \cite[Section 6, Question 1]{Vis12} Viselter asked whether there is some kind of universality of $\O(X)$ in the spirit of a gauge-invariant uniqueness theorem. By Corollary \ref{c:sut} we get that for symmetric random walks on non-elementary hyperbolic groups the quotient $\T(G,\mu) / \J_{\lambda} \cong C(\partial_{\mathrm{R}} G \times \mathbb{T}) \otimes \mathbb{K}(\ell^2(G))$ satisfies a $G \times \mathbb{T}$-uniqueness theorem even though it is a proper quotient of $\O(G,\mu) \cong C(\mathrm{R}(G,\mu) \times \mathbb{T}) \otimes \mathbb{K}(\ell^2(G))$, and hence of $\O(\Arv_0^P)$. Thus, even with additional natural symmetries that enable the existence of a unique smallest symmetry-equivariant quotient, the quotient by $\J_{\lambda}$ fails to coincide with Viselter's Cuntz-Pimsner algebra of the subproduct system.

When $\mathrm{R}(G,\mu) = G / R_{\mu}$, then $\partial_{\mathrm{R}}G = \emptyset$ and the action of $G$ on $\mathrm{R}(G,\mu)$ is minimal. In this case, we can deduce similarly that $\O(G,\mu)$ is the unique smallest $G \times \mathbb{T}$-equivariant quotient of $\T(G,\mu)$. Theorem \ref{T:symmetry-uniqueness} then motivates the following question in the complementary case

\begin{Ques}
Let $P$ be a random walk on $G$ induced by a measure $\mu$. Suppose $P$ has SRLP and that $\partial_{\mathrm{R}} G \neq \emptyset$. Is there a unique smallest $G \times \mathbb{T}$-equivariant quotient of $\T(G,\mu)$? Better yet, is the action of $G$ on $\partial_{\mathrm{R}} G$ always minimal?
\end{Ques}


\end{document}